\theoremstyle{plain} 
\newtheorem{theorem}{Theorem}[section]
\newtheorem{proposition}[theorem]{Proposition}
\newtheorem{lemma}[theorem]{Lemma}
\newtheorem{remark}{Remark}
\theoremstyle{definition}
\theoremstyle{remark}
\numberwithin{equation}{section}
\begin{document}
\title{Continuity and  growth of free multiplicative convolution semigroups}
\author{Xiaoxue Deng}
\address{Xiaoxue Deng: School of Mathematics and Statistics, Wuhan University, No. 299 BaYi Road, Wuhan, Hubei 430074, China}
\email{dengxx@whu.edu.cn}
\author{Ping Zhong}
\address{Ping Zhong: School of Mathematics and Statistics, Wuhan University, No. 299 BaYi Road, Wuhan, Hubei 430074, China; and Department of Pure Mathematics, University of Waterloo, Waterloo, ON, Canada}
\email{ping.zhong@uwaterloo.ca}

\begin{abstract}
Let $\mu$ be a compactly supported probability measure on the positive half-line and let $\mu^{\boxtimes t}$ be
the free multiplicative convolution semigroup. We show that the support of $\mu^{\boxtimes t}$
varies continuously as $t$ changes. We also obtain the asymptotic length of the support of 
these measures. 
\end{abstract}

\maketitle

\section{Introduction}
Let $\mu$ and $\nu$ be probability measures on $[0,\infty)$. 
The free convolution $\mu\boxtimes \nu$ represents 
the distribution of product of two positive operators
in a tracial $W^*$-probability space
whose distributions are $\mu$ and $\nu$ respectively. 
We refer to \cite{Basic} for an introduction to free probability theory. 

Given a probability measure $\mu$ on $[0,\infty)$ not being a Dirac measure, it is known \cite{BB2005}  that, for any $t>1$, the 
fractional free convolution power $\mu^{\boxtimes t}$ is defined appropriately, such that
it interpolates the discrete convolution semigroup $\{ \mu^{\boxtimes n}\}_{n\in \mathbb{Z}^+}$, where
$\mu^{\boxtimes n}={\mu\boxtimes \cdots\boxtimes\mu}$ is the $n$-fold free convolution.
This is the multiplicative analogue of Nica-Speicher semigroup \cite{NicaS1996} defined firstly for the free additive convolution. 
The free convolution semigroups obey many regularity properties and has been studied extensively. See \cite{Lectures16}
for a survey on free convolutions and other topics in free probability theory. 

Denote by $\mathrm{supp}(\mu)$ the support of the measure $\mu$. 
We will prove the following result about continuity of the support of $\mu^{\boxtimes t}$,
which is the analogue of the work \cite{John2016} for free multiplicative convolution on the positive half line. 
\begin{theorem}\label{thm:0.1}
	Let $\mu$ be a compactly supported probability measure on $[0, \infty)$. Then the 
	supports of measures $\{\mathrm{supp}(\mu^{\boxtimes t} \}$ change continuously in the Hausdorff metric
	with respect to the parameter $t$. 
\end{theorem}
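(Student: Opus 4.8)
\emph{The plan.} The strategy is to split the Hausdorff statement into two one-sided estimates. Fix $t_0$ (in $[1,\infty)$, the range on which $\mu^{\boxtimes t}$ is defined) and a compact neighbourhood $I$ of $t_0$. Using the semigroup property $\mu^{\boxtimes t}\boxtimes\mu^{\boxtimes s}=\mu^{\boxtimes(t+s)}$ for $t,s\ge1$ together with submultiplicativity of the operator norm, all the sets $\mathrm{supp}(\mu^{\boxtimes t})$, $t\in I$, lie in one fixed compact subset of $[0,\infty)$. Since $t\mapsto\mu^{\boxtimes t}$ is weak-$*$ continuous and weak-$*$ convergence $\nu_n\to\nu$ forces $\mathrm{supp}(\nu)\subseteq\liminf_n\mathrm{supp}(\nu_n)$, a finite-cover argument gives $\sup_{x\in\mathrm{supp}(\mu^{\boxtimes t_0})}\mathrm{dist}\bigl(x,\mathrm{supp}(\mu^{\boxtimes t})\bigr)\to0$ as $t\to t_0$. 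So the entire content is the opposite inclusion $\limsup_{t\to t_0}\mathrm{supp}(\mu^{\boxtimes t})\subseteq\mathrm{supp}(\mu^{\boxtimes t_0})$: if $x_0\notin\mathrm{supp}(\mu^{\boxtimes t_0})$, I must show that a fixed open neighbourhood of $x_0$ misses $\mathrm{supp}(\mu^{\boxtimes t})$ for all $t$ near $t_0$.

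\emph{Analytic set-up.} I would work with $\psi_\mu(z)=\int_0^\infty\frac{xz}{1-xz}\,d\mu(x)$ and $\eta_\mu=\psi_\mu/(1+\psi_\mu)$, recalling that for $t>1$ the measure $\mu^{\boxtimes t}$ is absolutely continuous on $(0,\infty)$ with locally analytic density (plus possibly an atom at $0$), so that a point $x\in(0,\infty)$ lies outside $\mathrm{supp}(\mu^{\boxtimes t})$ exactly when $\eta_{\mu^{\boxtimes t}}$ continues meromorphically, with real boundary values, across $1/x$. The semigroup is encoded by $\Sigma_{\mu^{\boxtimes t}}=(\Sigma_\mu)^{t}$, which unwinds to the subordination identity $\eta_{\mu^{\boxtimes t}}=\eta_\mu\circ\omega_t$, with $\omega_t$ the functional inverse of
\[
\Psi_t(v)=v\Bigl(\frac{v}{\eta_\mu(v)}\Bigr)^{t-1}
\]
(defined near $0$ and extending to a subordination function on $\mathbb{C}\setminus[0,\infty)$ by \cite{BB2005}). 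Thus $1/x$ is a regular point of $\eta_{\mu^{\boxtimes t}}$ precisely when there is a real $v_0=\omega_t(1/x)$ with $\Psi_t(v_0)=1/x$, $\Psi_t'(v_0)\neq0$, and $\eta_\mu$ regular at $v_0$ (so $1/v_0\notin\mathrm{supp}(\mu)$); equivalently, $(0,\infty)\setminus\mathrm{supp}(\mu^{\boxtimes t})$ is the image under $v\mapsto1/\Psi_t(v)$ of the explicit open set where $\Psi_t'>0$ and $\mu$ is locally regular. I would allow $\mathrm{supp}(\mu^{\boxtimes t})$ to be disconnected throughout and make no attempt to reduce to tracking only two edge points.

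\emph{Perturbation in $t$.} Now fix $x_0>0$ with $x_0\notin\mathrm{supp}(\mu^{\boxtimes t_0})$ and put $v_0=\omega_{t_0}(1/x_0)$. Since $x_0$ is not a support edge one has $\Psi_{t_0}'(v_0)\neq0$, and near $v_0$ the ratio $v/\eta_\mu(v)$ is real, finite and positive, so $(t,v)\mapsto\Psi_t(v)=v\exp\bigl((t-1)\log(v/\eta_\mu(v))\bigr)$ is jointly real-analytic near $(t_0,v_0)$ with nonvanishing $v$-derivative there. The implicit function theorem then yields a jointly analytic family $\omega_t(w)$, $|t-t_0|<\varepsilon$, $|w-1/x_0|<\delta$, of local inverses with $\omega_{t_0}(1/x_0)=v_0$; by uniqueness of analytic continuation these agree with the true subordination functions. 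Hence $\eta_{\mu^{\boxtimes t}}=\eta_\mu\circ\omega_t$ continues analytically with real values on the fixed interval $(1/x_0-\delta,\,1/x_0+\delta)$ for every such $t$, so a fixed neighbourhood of $x_0$ is disjoint from $\mathrm{supp}(\mu^{\boxtimes t})$. This is precisely the non-escape statement, and with the first paragraph it proves the theorem. (The endpoint $x_0=0$ is handled the same way after replacing ``regular at $1/x_0$'' by ``regular at $\infty$'' for $\psi_{\mu^{\boxtimes t}}$, using continuity of the atom $\mu^{\boxtimes t}(\{0\})$.)

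\emph{Main obstacle.} The delicate part is the two places where this picture degenerates. First, at the actual edges of $\mathrm{supp}(\mu^{\boxtimes t})$ — where $\Psi_t'(v)=0$ or $v$ sits at an endpoint of a regularity interval of $\mu$ — I must show that the corresponding critical values $\Psi_t(v)$ move continuously with $t$ and that no gap of the support opens or closes abruptly; the tools here are monotonicity of $t\mapsto\Psi_t(v)$ for fixed real $v$ in a regularity interval, together with the local analyticity of the density of $\mu^{\boxtimes t}$, $t>1$ (this is the multiplicative counterpart of the edge analysis in \cite{John2016}). Second, right-continuity at $t=1$, where $\mu$ itself may have an arbitrary compact support: there I must control the convergence $\eta_{\mu^{\boxtimes t}}\to\eta_\mu$ uniformly on compact subsets of the complement of $\mathrm{supp}(\mu)$ as $t\downarrow1$, again via $\omega_t\to\mathrm{id}$ and the explicit form of $\Psi_t$. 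These two degeneracies are where I expect the real work to lie.
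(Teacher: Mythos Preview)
Your approach is sound and genuinely different from the paper's. The paper never invokes weak-$*$ continuity or the implicit function theorem; instead it works entirely through the explicit parametrisation of $\mathrm{supp}((\mu^{\boxtimes t})^{\mathrm{ac}})$ from \cite{HZ2014}. Writing $\kappa_\mu=\exp u$ as in (\ref{eq:rep-2.1}), it introduces $g(r,\theta)$, $A_t(r)$, $V_t^+=\{r:g(r)>1/(t-1)\}$ and $h_t(r)=\Phi_t(re^{iA_t(r)})$, proves directly that the sets $V_t^+$ and the boundary graphs $\{re^{iA_t(r)}\}$ vary Hausdorff-continuously in $t$ (via contradiction arguments using monotonicity of $g(r,\theta)$ in $\theta$), and then estimates the ratio $h_{t_0}(x)/h_s(y)$ by bounding $|u'|$ along curves in $\Omega_s\cap\Omega_{t_0}$. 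Atoms and the point $0$ are handled by separate lemmas. This is a hands-on computation that reuses exactly the machinery needed later for Theorem~\ref{thm:0.2}.

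One remark on your write-up: the ``main obstacle'' you flag at the edges is not in fact an obstacle for Hausdorff continuity. For the hard inclusion you only need to cover the compact set $K_\varepsilon=\{x:\mathrm{dist}(x,\mathrm{supp}(\mu^{\boxtimes t_0}))\ge\varepsilon\}$ by finitely many of your implicit-function neighbourhoods, and every $x_0\in K_\varepsilon$ is strictly interior to the complement, so $\Psi_{t_0}'(v_0)\neq0$ comes for free from $\Psi_{t_0}\circ\omega_{t_0}=\mathrm{id}$; no edge-tracking is needed. The genuine technical point you should pin down is why the local inverse furnished by the implicit function theorem agrees with the global subordination $\omega_t$ on a one-sided neighbourhood---this follows from the injectivity of $\Phi_t$ on $\overline{\Omega_t}$ recalled in Section~2, but deserves a sentence. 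What each route buys: yours is shorter and cleanly isolates the soft half from the hard half; the paper's explicit estimates are tailored to feed into the quantitative norm asymptotics of Section~4. (The paper, like you, treats only $t_0>1$; right-continuity at $t_0=1$ is not addressed there either.)
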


We then study the asymptotic size of the support. 
\begin{theorem}\label{thm:0.2}
	Let $\mu$ be a compactly supported probability measure on $[0, \infty)$ with mean 
	$m_1(\mu)=\int_0^\infty s \,d\mu(s)=1$.
	We denote by 
	$
	|| \mu^{\boxtimes t}|| =\max\{m: m\in\mathrm{supp}(\mu^{\boxtimes t}) \}.
	$
	Then 
	\[
	\lim_{t\rightarrow \infty}{||\mu^{\boxtimes t} ||}/{t}=eV.
	\]
	where $V$ is the variance of $\mu$. 
\end{theorem}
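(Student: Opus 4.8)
The plan is to read off $\|\mu^{\boxtimes t}\|$ from the $S$-transform of $\mu$ near the origin and to see that the factor $e$ comes from the scale $w\sim1/(Vt)$ at which an auxiliary function has a critical point. Write $r_t:=\|\mu^{\boxtimes t}\|$. We may assume $\mu$ is not a point mass, so $V>0$; the case $\mu=\delta_1$ is trivial, since then $\mu^{\boxtimes t}=\delta_1$, $r_t\equiv1$, and both sides of the asserted limit vanish. As $\mu$ is compactly supported, the moment series $\psi_\mu(z)=\sum_{k\ge1}m_k(\mu)z^{k}$ has positive radius of convergence, and since $m_1(\mu)=1\neq0$ its functional inverse $\chi_\mu$, hence $S_\mu(w)=\tfrac{1+w}{w}\chi_\mu(w)$, is analytic on a fixed disc about $0$; inverting the series of $\psi_\mu$ to second order gives $S_\mu(0)=1$ and
\[
  S_\mu(w)=1-Vw+O(w^{2}),\qquad V=m_2(\mu)-1=\operatorname{Var}(\mu).
\]
Using $S_{\mu^{\boxtimes t}}=S_\mu^{\,t}$, for small $w>0$ we get $S_\mu(w)^{-t}=\exp\!\big(tVw+O(tw^{2})\big)$, which converges to $e$ precisely when $tw\to1/V$; this is the source of the constant $eV$.

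The key input — here is where the analysis underlying Theorem~\ref{thm:0.1} and the regularity theory of free multiplicative convolution (cf.\ \cite{BB2005}) enter — is the identity
\[
  r_t=\frac{1+w_t}{w_t\,S_\mu(w_t)^{\,t}},
\]
valid for $t$ large, where $w_t>0$ is the smallest positive solution of
\[
  t\,\frac{S_\mu'(w)}{S_\mu(w)}=-\frac{1}{w(1+w)}.
\]
Its provenance: writing $\chi_{\mu^{\boxtimes t}}(w)=\tfrac{w}{1+w}S_\mu(w)^{\,t}$ and using the standard relation $zG_{\mu^{\boxtimes t}}(z)=1+\psi_{\mu^{\boxtimes t}}(1/z)$, the curve $w\mapsto\big(wS_\mu(w)^{\,t},\ \tfrac{1+w}{wS_\mu(w)^{\,t}}\big)$ parametrizes the graph of $K_{\mu^{\boxtimes t}}$, the inverse function of the Cauchy transform $G_{\mu^{\boxtimes t}}$; as in \cite{John2016} for the additive case, the right edge of $\operatorname{supp}(\mu^{\boxtimes t})$ is the value of $K_{\mu^{\boxtimes t}}$ at its first critical point, i.e.\ at the first $w>0$ where $\tfrac{d}{dw}\log\tfrac{1+w}{wS_\mu(w)^{\,t}}=\tfrac1{1+w}-\tfrac1w-t\tfrac{S_\mu'(w)}{S_\mu(w)}$ vanishes — provided, as one expects for $t$ large, that this edge is a soft (square-root) edge and not an atom. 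As a sanity check, the Marchenko--Pastur law has $S_\mu(w)=(1+w)^{-1}$, the equation then gives $w_t=1/t$ and $r_t=(t+1)^{t+1}/t^{t}\sim et$, in agreement with the known Fuss--Catalan support.

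Granting the edge identity, the conclusion is elementary. On a fixed small interval $(0,\delta)$ the function $(\log S_\mu)'$ is analytic with $(\log S_\mu)'(0)=-V$, so, shrinking $\delta$ if needed, the left side $t(\log S_\mu)'(w)$ of the critical-point equation lies in $[-2tV,-\tfrac12tV]$ there, while the right side $-1/(w(1+w))$ increases strictly and steeply from $-\infty$; hence for $t$ large the equation has exactly one root in $(0,\delta)$, which is therefore its smallest positive root $w_t$, and $w_t(1+w_t)=1/\big(t(V+O(w_t))\big)$. In particular $w_t\to0$, so $tVw_t\to1$. Substituting into the edge identity, the prefactor equals $\tfrac{1+w_t}{w_t}=\tfrac1{w_t}(1+w_t)=tV\,(1+o(1))$, and $S_\mu(w_t)^{-t}=\exp\!\big(tVw_t+O(tw_t^{2})\big)\to e$ since $tVw_t\to1$ and $tw_t^{2}=(tVw_t)\,w_t/V\to0$. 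Hence $r_t=eVt\,(1+o(1))$, that is, $r_t/t\to eV$.

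The main obstacle is the edge identity: one must show that for $t$ large the right edge of $\operatorname{supp}(\mu^{\boxtimes t})$ is regular (not atomic), that the density vanishes there like a square root so that $K_{\mu^{\boxtimes t}}$ truly has a critical point, and that the branch of $\chi_{\mu^{\boxtimes t}}=\tfrac{w}{1+w}S_\mu^{\,t}$ in play is exactly the continuation inverting $\psi_{\mu^{\boxtimes t}}$ up to that point; Theorem~\ref{thm:0.1} is then invoked to rule out jumps in the support as $t$ varies. Two minor reductions should be recorded: the case $V=0$ is trivial, as noted, and an atom of $\mu$ at $0$ causes no harm, since $\psi_\mu$, $S_\mu$, and all the local analysis at $w=0$ depend only on the moments of $\mu$, and a possible atom of $\mu^{\boxtimes t}$ at $0$ does not affect its right edge.
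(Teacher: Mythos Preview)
Your $S$-transform computation is correct and, once the edge identity is granted, parallels the paper's exactly: under the change of variable $w=\psi_\mu(\alpha)$ one has $S_\mu(w)=\alpha/\eta_\mu(\alpha)=\exp u(\alpha)$, your critical-point equation $t\,S_\mu'/S_\mu=-1/(w(1+w))$ becomes precisely the paper's $g(\alpha_t)=1/(t-1)$, and your $F(w_t)=(1+w_t)/(w_tS_\mu(w_t)^t)$ equals the paper's $1/h_t(\alpha_t)$. So the asymptotics $tVw_t\to1$ and $S_\mu(w_t)^{-t}\to e$ are just the paper's $t\alpha_t\to1/V$ and $(t-1)u(\alpha_t)\to-1$ (via Lemma~\ref{lemma:3.1}) in different coordinates.

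The genuine gap is the edge identity itself, and the justification you sketch does not close it. Theorem~\ref{thm:2.1} (from \cite{BB2005}) says only that the density of $\mu^{\boxtimes t}$ is analytic where positive; it does not control the order of vanishing at the right edge $b=\|\mu^{\boxtimes t}\|$. Your identity needs $\int(b-s)^{-2}\,d\mu^{\boxtimes t}(s)=\infty$: otherwise $\psi_{\mu^{\boxtimes t}}'$ remains finite up to $1/b$, so $\chi_{\mu^{\boxtimes t}}$ has nonzero derivative at $w^*:=\psi_{\mu^{\boxtimes t}}(1/b^-)$, the analytic function $F(w)=1/\chi_{\mu^{\boxtimes t}}(w)$ continues strictly decreasing past $w^*$, and its first critical point $w_t>w^*$ gives $F(w_t)<b$ rather than $F(w_t)=b$. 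Invoking Theorem~\ref{thm:0.1} does not help either, since Hausdorff continuity of $t\mapsto\mathrm{supp}(\mu^{\boxtimes t})$ says nothing about the edge exponent at fixed $t$. The paper bypasses this entirely by reading the edge off Theorem~\ref{density1} (from \cite{HZ2014}): since $(\mu^{\boxtimes t})^{\mathrm{ac}}$ is concentrated on the closure of $\{1/h_t(r):r\in V_t^+\}$ with $h_t$ a homeomorphism of $[0,\infty)$, one gets $\|\mu^{\boxtimes t}\|=1/h_t(\alpha_t)$ with $\alpha_t=\min V_t^+$ immediately, without any local edge analysis. That density formula (or an equivalent soft-edge statement for $\mu^{\boxtimes t}$) is what your argument is missing.
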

Theorem \ref{thm:0.2} generalizes Kargin's work \cite{Kargin2008} (see also \cite{OctavioC2012}) to continuous semigroup. Our proof is different from Kargin's proof, but uses the density formula for free convolution semigroups \cite{HZ2014, zhong6}. 
The free multiplicative convolution on the unit circle is usually studied together with the positive half line case. It was shown in \cite{exp2016} that many results can be deduced from results on free additive convolutions. Since analogue results were known in additive case, a separate work on free multiplicative convolution on the unit circle become unnecessary. Hence we focus on measures on the positive half line in our article.

An estimation for the size of support of free additive convolution semigroups was obtained in \cite{BCN2012, zhong5}. In light of the proof of Theorem \ref{thm:0.2}, it is very likely higher order asymptotic expansion can be obtained for free additive convolution semigroups. We plan to investigate it in a forthcoming work as it may have some applications in quantum information theory.

The paper is organized as follows. In Section 2, we collect some known regularity results about free multiplicative convolution semigroups. In Section 3, we give the proof for Theorem \ref{thm:0.1}. We study the asymptotic behavior of free multiplicative convolution semigroups
and give the proof for Theorem \ref{thm:0.2} in Section 4.

\section{Free convolution on the positive half line }
 
Let $\mu$ be a probability measure
on $[0,\infty)$.
The $\psi$-transform of 
$\mu$ is the moment generating function of $\mu$ defined as
\[\psi_\mu(z)=\int_0^\infty\frac{zs}{1-zs}\;d\mu(s),\] which
is analytic on $\Omega=\mathbb{C}\backslash[0,\infty)$. The $\eta$-transform of $\mu$ is
defined as $\eta_\mu=\psi_\mu/(1+\psi_\mu)$
on the same domain as the
$\psi$-transform. 

Any probability measure $\mu$ on $[0,\infty)$ can be recovered from
its $\eta$-transform by Stieltjes inversion formula. Indeed, we have the identity
\begin{equation}\label{eq:identity}
G_\mu\left(\frac{1}{z}\right)=\frac{z}{1-\eta_\mu(z)},\;\;\;\;\;z\in\Omega,
\end{equation}
where $G_\mu$ is the Cauchy transform of $\mu$.
If $\mu$ is not a Dirac measure, then $\eta_\mu'(z)>0$
for $z<0$, and therefore $\eta_\mu|(-\infty,0)$ is invertible. Let
$\eta_\mu^{-1}$ be the inverse of $\eta_\mu$ and set
$\Sigma_\mu(z)={\eta_\mu^{-1}(z)}/{z}$,
where $z<0$ is sufficiently small. 
The free convolution of two such probability measures
$\mu$ and $\nu$ is determined by $\Sigma_{\mu\boxtimes \nu}(z)=\Sigma_\mu(z)\Sigma_\nu(z)$. 
In particular, the $n$-th order free multiplicative convolution power
$\mu^{\boxtimes n}$ of $\mu$ satisfies the identity
\begin{equation}\label{eq:02.1}
\Sigma_{\mu^{\boxtimes n}}(z)=\Sigma_\mu^n(z),
\end{equation}
 where $z<0$ is sufficiently small. 

We now briefly recall the construction of $\mu^{\boxtimes t}$ which interpolates the relation (\ref{eq:02.1}) as follows. 
Let $\kappa_{\mu}(z)=z/\eta_{\mu}(z)$ for $z\in \Omega$
and one can write $\kappa_\mu(z)=\exp(u(z))$, where
$u$ is an analytic function on $\Omega$ and can be expressed as
\begin{equation}\label{eq:rep-2.1}
u(z)=a+\int_0^\infty \frac{1+zs}{z-s}d\rho(s),
\end{equation}
where $a=-\log|\eta_{\mu}(i)|$ and $\rho$ is a finite positive Borel measure on $[0, \infty)$ following
\cite[Proposition 4.1]{HZ2014}.

We define $
\Phi_t(z):=z\exp[(t-1)u(z)]$. 
It turned out that the function $\Phi_t$ is the right inverse of Voiculescu subordination function $\omega_t$ \cite{BB2005}. 
More precisely, we have
\[\Phi_t(\omega_t(z))=z, \quad \text{and}\quad \eta_{\mu^{\boxtimes t}}(z)=\eta_{\mu}(\omega_t(z)) \]
for all $z\in \Omega$ and $t>1$. 
It turns out that the function $\omega_t$ can be regarded
as the $\eta$-transform of a $\boxtimes$-infinitely divisible measure on $[0,\infty)$ and the function
$\eta_{\mu^{\boxtimes t}}$ can be retrieved from $\omega_t$. We refer to \cite{BB2005, HZ2014} for more details. 

The following result was proved in \cite{BB2005}.
\begin{theorem}\label{thm:2.1}
Let $\mu$ be a probability measure on $[0,\infty)$ and $t>1$. 
\begin{enumerate}
\item A point $x\in (0,\infty)$ satisfies $\eta_{\mu^{\boxtimes t}}(x)=1$ if and only if $x^{-1/t}$ is an atom 
of $\mu$ with mass $\mu ( \{ x^{-1/t}\})\geq (t-1)/t$. If $\mu(\{ x^{-1/t}\})>(t-1)/t$, then $1/x$ 
is an atom of $\mu^{\boxtimes t}$, and 
\[
   \mu^{\boxtimes t}\left(\left\{ 1/x\right\} \right)=t\mu (\{x^{-1/t} \})-(t-1).
\]
\item The nonatomic part of $\mu^{\boxtimes t}$ is absolutely continuous and its density is continuous except at the 
finitely many points $x$ such that $\eta_{\boxtimes t}(x)=1$.
\item The density of $\mu^{\boxtimes t}$ is analytic at all points where it is different from zero. 
\end{enumerate}
\end{theorem}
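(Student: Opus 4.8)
\noindent\textbf{Proof proposal for Theorem~\ref{thm:2.1}.}
The plan is to reduce all three assertions to the boundary behaviour of $\eta_{\mu^{\boxtimes t}}=\eta_{\mu}\circ\omega_{t}$, and to extract that behaviour from the explicit right inverse $\Phi_{t}(z)=z\exp[(t-1)u(z)]$ of $\omega_{t}$ together with the Stieltjes inversion encoded in \eqref{eq:identity}. First I would record the elementary properties of $\Phi_{t}$: by \eqref{eq:rep-2.1} the function $u$ is of Pick type (it maps $\mathbb{C}^{+}$ into $\mathbb{C}^{-}$ and is real where $\eta_{\mu}$ is), $\Phi_{t}$ is analytic on $\Omega$, and, writing $u=\log z-\log\eta_{\mu}$,
\[
\frac{\Phi_{t}'(z)}{\Phi_{t}(z)}=\frac{t}{z}-(t-1)\,\frac{\eta_{\mu}'(z)}{\eta_{\mu}(z)} .
\]
Since $\Phi_{t}\circ\omega_{t}=\mathrm{id}$, the subordination function $\omega_{t}$ is the inverse branch of $\Phi_{t}$, so its regularity is governed by the critical points of $\Phi_{t}$. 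The central analytic input — and the step I expect to be the main obstacle — is the boundary correspondence for this univalent map: $\Phi_{t}$ has only finitely many critical points in $\overline{\mathbb{C}^{+}}$, the inverse $\omega_{t}$ extends continuously to all of $\overline{\mathbb{C}^{+}}$, and the extension is analytic across every boundary point $x$ with $\Phi_{t}'(\omega_{t}(x))\neq 0$. This is the classical argument combining a Lindel\"{o}f-type boundary-value theorem with the reflection principle and the implicit function theorem.

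Granting this, part~(1) is a computation. A point $y>0$ with $\eta_{\mu}'(y)<\infty$ satisfies $\eta_{\mu}(y)=1$ exactly when $a:=1/y$ is an atom of $\mu$, in which case $1-\eta_{\mu}(z)\sim(1-az)/\mu(\{a\})$ as $z\to 1/a$, whence $\eta_{\mu}'(1/a)=a/\mu(\{a\})$; and since $u(1/a)=\log(1/a)-\log\eta_{\mu}(1/a)=-\log a$ when $\eta_{\mu}(1/a)=1$, one gets $\Phi_{t}(1/a)=a^{-t}$. Thus $\eta_{\mu^{\boxtimes t}}(x)=\eta_{\mu}(\omega_{t}(x))=1$ forces $\omega_{t}(x)=1/a$ for an atom $a$ of $\mu$ and $x=\Phi_{t}(1/a)=a^{-t}$, i.e.\ $x^{-1/t}=a\in\mathrm{supp}(\mu)$. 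Whether $\omega_{t}$ actually attains $1/a$ is decided by the sign of $\Phi_{t}'$ along $(-\infty,1/a)$; this rules out the spurious solutions of $\eta_{\mu}=1$ at non-atoms (where $\eta_{\mu}'=+\infty$ and so $\Phi_{t}'=-\infty$), and for a genuine atom, substituting $\eta_{\mu}'(1/a)=a/\mu(\{a\})$ into the identity above yields
\[
\frac{\Phi_{t}'(1/a)}{\Phi_{t}(1/a)}=a\Bigl(t-\frac{t-1}{\mu(\{a\})}\Bigr)\ge 0\quad\Longleftrightarrow\quad\mu(\{a\})\ge\frac{t-1}{t},
\]
the stated threshold. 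When the inequality is strict $\Phi_{t}$ is a local diffeomorphism at $1/a$, so $\omega_{t}$ is analytic at $x$, the point $1/x$ is an atom of $\mu^{\boxtimes t}$, and the asymptotics just recorded give
\[
\mu^{\boxtimes t}\bigl(\{1/x\}\bigr)=\lim_{z\to x}\frac{x-z}{x\bigl(1-\eta_{\mu^{\boxtimes t}}(z)\bigr)}=\frac{\Phi_{t}'(1/a)}{x\,\eta_{\mu}'(1/a)}=t\mu(\{a\})-(t-1),
\]
using $\eta_{\mu^{\boxtimes t}}=\eta_{\mu}\circ\omega_{t}$, $\omega_{t}'(x)=1/\Phi_{t}'(1/a)$ and $\Phi_{t}(1/a)=x$. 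Conversely every atom of $\mu^{\boxtimes t}$ arises in this way, being a pole of $G_{\mu^{\boxtimes t}}$ and hence, by \eqref{eq:identity}, a point with $\eta_{\mu^{\boxtimes t}}=1$; and there are at most $t/(t-1)$ of them.

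Parts~(2) and~(3) then follow from the boundary regularity of $\omega_{t}$. Off the finite set $S$ consisting of the $x$ with $\Phi_{t}'(\omega_{t}(x))=0$ and the (at most $t/(t-1)$) points $x$ with $\eta_{\mu^{\boxtimes t}}(x)=1$, the map $\omega_{t}$ continues analytically across $(0,\infty)$, hence so do $\eta_{\mu^{\boxtimes t}}$ and, via \eqref{eq:identity}, $w\mapsto G_{\mu^{\boxtimes t}}(w)$; so on each component of $(0,\infty)\setminus(S\cup\{0\})$ the measure $\mu^{\boxtimes t}$ has a real-analytic density $x\mapsto-\tfrac1\pi\,\mathrm{Im}\,G_{\mu^{\boxtimes t}}(x+i0)$ and no singular part, while the points of $S$ at which $\Phi_{t}'\circ\omega_{t}$ vanishes are edges of $\mathrm{supp}(\mu^{\boxtimes t})$, where $\omega_{t}$ is still continuous and the density vanishes continuously. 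Together with part~(1) this gives~(2) (the non-atomic part is absolutely continuous, with density continuous except at the finitely many $x$ with $\eta_{\mu^{\boxtimes t}}(x)=1$) and~(3) (the density is real-analytic wherever it does not vanish, such a point lying in one of the analytic intervals above). The one genuinely hard step is the boundary correspondence for $\Phi_{t}$ asserted in the first paragraph; everything afterwards is bookkeeping with \eqref{eq:identity} and the formula for $\Phi_{t}'/\Phi_{t}$.
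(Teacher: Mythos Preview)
The paper does not prove Theorem~\ref{thm:2.1}; it is quoted verbatim as a known result with the sentence ``The following result was proved in \cite{BB2005}'' and no argument is supplied. So there is no ``paper's own proof'' to compare against.

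That said, your sketch is the right one and is essentially the argument in the Belinschi--Bercovici reference: one analyses the boundary extension of the subordination function $\omega_{t}$ via its explicit right inverse $\Phi_{t}$, locates the exceptional boundary points as the (finitely many) solutions of $\eta_{\mu^{\boxtimes t}}(x)=1$ together with the critical values of $\Phi_{t}$, and reads off atoms and the regularity of the density from \eqref{eq:identity}. Your local computations (the asymptotics $1-\eta_{\mu}(z)\sim(1-az)/\mu(\{a\})$, the value $\Phi_{t}(1/a)=a^{-t}$, the threshold $\mu(\{a\})\ge(t-1)/t$ from the sign of $\Phi_{t}'(1/a)$, and the mass formula) are all correct. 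The only substantive gap you yourself flag --- that $\omega_{t}$ extends continuously to $\overline{\mathbb{C}^{+}}$ with only finitely many non-analytic boundary points --- is indeed the crux, and in \cite{BB2005} it is handled by showing $\Phi_{t}$ has an angular derivative in the Julia--Carath\'eodory sense at every boundary point of $\Omega_{t}$ and invoking the corresponding boundary-value theory for conformal maps; you would need to fill this in to have a complete proof.
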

The study of regularity property of free convolutions relies on Voiculescu's subordination result \cite{BB2005, BB2007new, Biane1998, DVV1993}. 
By a careful study of boundary behavior of subordination functions,
we were able to give a formula for the density function of absolutely continuous part of $\mu^{\boxtimes t}$ in \cite{HZ2014}.
To describe our result, we need some auxiliary functions studied in \cite{HZ2014}. 

Let $g$ be a function defined on $(0,\infty)\times (0,\pi)$ by
\begin{equation}\label{eq:02.3}
 g(r,\theta)=-\frac{\Im u(re^{i\theta})}{\theta}=\frac{r\sin\theta}{\theta}\int_0^\infty\frac{s^2+1}{r^2-2rs\cos\theta+s^2}\;d\rho(s).
\end{equation}
The function $g(r,\theta)$ is decreasing on $(0,\pi)$ for any
$r\in (0,\infty)$ fixed and $\lim_{\theta\to\pi^-}g(r,\theta)=0$.
We then set
\[g(r)=\lim_{\theta\to 0}g(r,\theta)=\int_0^\infty\frac{r(s^2+1)}{(r-s)^2}\;d\rho(s)\]
and
\[A_t(r)=\inf\left\{\theta\in(0,\pi):g(r,\theta)<\frac{1}{t-1}\right\}.\] 
It is clear that if $A_t(r)>0$, then $g(r,A_t(r))=1/(t-1)$. 
We further let $ h_t(r):=\Phi_t(re^{i A_t(r)})$ and 
\[V_t^+=\left\{r\in(0,\infty):g(r)>\frac{1}{t-1}\right\}.\]
The function $h_t$ is a homeomorphism of $[0,\infty)$ and
$\lim_{r\rightarrow \infty}h_t(r)=\infty$. 

The functions defined above can be used to describe the image set $\Omega_t=\omega_t(\mathbb{C}^+)$. The set  $\Omega_t$ is in fact the connected component of $\Phi_t^{-1}(\mathbb{C}^+\cup (-\infty,0))$ 
having the negative half plane as part of its boundary.
Moreover, we proved that
\begin{equation}\nonumber
\Omega_t=\{re^{i\theta}: A_t(r)<\theta<\pi, r\in (0,\infty) \},
\end{equation}
and 
\begin{equation}\nonumber
\partial\Omega_t=(-\infty, 0]\cup\{re^{i\theta}: \theta=A_t(r), r\in (0,\infty) \}.
\end{equation}
The following result is one of main results in \cite{HZ2014}. 
\begin{theorem} \label{density1}
Suppose that $\mu$ is a probability measure on $[0,\infty)$ not being a Dirac measure and $t>1$. Let
$S_t=\left\{1/{h_t(r)}:r\in V_t^+\right\}$.
Then the following statements hold.
\begin{enumerate} [$(1)$]
\item {The measure $(\mu^{\boxtimes t})^{\mathrm{ac}}$ is concentrated on the closure of $S_t$.}
\item {The density of $(\mu^{\boxtimes t})^{\mathrm{ac}}$ is analytic on the set
$S_t$ and is given by
\[\frac{d(\mu^{\boxtimes t})^{\mathrm{ac}}}{dx}\left(\frac{1}{h_t(r)}\right)=
\frac{1}{\pi}\frac{h_t(r)l_t(r)\sin\theta_t(r)}{1-2l_t(r)\cos\theta_t(r)+l_t^2(r)},\;\;\;\;\;r\in
V_t^+,\] where $l_t(r)=r\exp\Re u(re^{iA_t(r)})$
and $\theta_t(r)={tA_t(r)}/{(t-1)}$
for $r\in V_t^+$.}
\item {The number of components in $\mathrm{supp}(\mu^{\boxtimes t})^{\mathrm{ac}}$ is a decreasing function of $t$.}
\end{enumerate}
\end{theorem}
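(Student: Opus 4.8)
The plan is to obtain the density of $\nu:=\mu^{\boxtimes t}$ by Stieltjes inversion applied to the identity \refto{identity}, pushing the computation through the subordination relation $\eta_{\nu}=\eta_{\mu}\circ\omega_{t}$ and the explicit formula $\Phi_{t}(z)=z\exp[(t-1)u(z)]$ for the inverse of $\omega_{t}$ (recall $\Phi_{t}(\omega_{t}(z))=z$). By \refto{identity} the absolutely continuous density of $\nu$ at $y>0$ equals $-\frac{1}{\pi}\Im G_{\nu}(y+i0)$; writing $y+i0=1/z$ (so $z\to 1/y$ through the lower half-plane) and using the reflection $\eta_{\nu}(\bar z)=\overline{\eta_{\nu}(z)}$, one gets
\[
\frac{d\nu^{\mathrm{ac}}}{dy}(y)=\frac{1}{\pi y}\cdot\frac{\Im\eta_{\nu}(1/y+i0)}{\bigl|1-\eta_{\nu}(1/y+i0)\bigr|^{2}}.
\]
Everything is thus reduced to the boundary values of $\eta_{\nu}=\eta_{\mu}\circ\omega_{t}$ on $(0,\infty)$, i.e.\ to the boundary behaviour of the conformal bijection $\omega_{t}\colon\mathbb{C}^{+}\to\Omega_{t}$.

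These boundary values can be read off from the descriptions of $\Omega_{t}=\{re^{i\theta}:A_{t}(r)<\theta<\pi\}$ and $\partial\Omega_{t}$. Since $\Phi_{t}$ is real and negative on $(-\infty,0)$, the boundary correspondence of the conformal bijection $\Phi_{t}|_{\Omega_{t}}\colon\Omega_{t}\to\mathbb{C}^{+}$ (the inverse of $\omega_{t}$) forces $\omega_{t}(x+i0)$, for $x>0$, to lie in $\partial\Omega_{t}\setminus(-\infty,0]=\{re^{iA_{t}(r)}:r>0\}$. Using \refto{02.3}, for $r\in V_{t}^{+}$ the argument of $\Phi_{t}(re^{iA_{t}(r)})$ is $A_{t}(r)\bigl(1-(t-1)g(r,A_{t}(r))\bigr)=0$, so $\Phi_{t}(re^{iA_{t}(r)})=h_{t}(r)>0$; as $h_{t}$ is an increasing homeomorphism of $[0,\infty)$, it follows that $\omega_{t}(h_{t}(r)+i0)=re^{iA_{t}(r)}\in\mathbb{C}^{+}$ for $r\in V_{t}^{+}$, whereas for $x>0$ outside $h_{t}(V_{t}^{+})$ the point $\omega_{t}(x+i0)$ is a real $r\in(0,\infty)$ with $g(r)\le 1/(t-1)<\infty$. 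Such $r$ lie off the bulk of $\mathrm{supp}(\rho)$ (where $g\equiv+\infty$), and there $\eta_{\mu}$ has real or vanishing boundary values up to a null set; combined with the inversion formula this proves part (1): $\nu^{\mathrm{ac}}$ is carried by $\overline{S_{t}}=\overline{\{1/h_{t}(r):r\in V_{t}^{+}\}}$.

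For part (2), fix $r\in V_{t}^{+}$ and substitute $\eta_{\nu}(h_{t}(r)+i0)=\eta_{\mu}(re^{iA_{t}(r)})$ into the inversion formula. Since $\eta_{\mu}(w)=w\exp(-u(w))$, splitting $u(re^{iA_{t}(r)})$ into real and imaginary parts and using $\Im u(re^{i\theta})=-\theta g(r,\theta)$ with $g(r,A_{t}(r))=1/(t-1)$ yields $\eta_{\mu}(re^{iA_{t}(r)})=l_{t}(r)e^{i\theta_{t}(r)}$ with $\theta_{t}(r)=A_{t}(r)+A_{t}(r)/(t-1)=tA_{t}(r)/(t-1)$; putting $1/y=h_{t}(r)$ then gives the stated density. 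Because $\eta_{\mu}$ maps $\mathbb{C}^{+}$ into $\mathbb{C}^{+}$, the value $\eta_{\mu}(re^{iA_{t}(r)})$ lies in $\mathbb{C}^{+}$, so $\theta_{t}(r)\in(0,\pi)$, the denominator is strictly positive, and the density is strictly positive on $S_{t}$; hence $\mathrm{supp}(\nu^{\mathrm{ac}})=\overline{S_{t}}$. Analyticity holds because $g$ is real-analytic and strictly decreasing in $\theta$ on $(0,\infty)\times(0,\pi)$ (the integrand is real-analytic there since $r^{2}-2rs\cos\theta+s^{2}=|r-se^{i\theta}|^{2}>0$), so the analytic implicit function theorem makes $A_{t}$ --- and then $h_{t},l_{t},\theta_{t}$ --- real-analytic on $V_{t}^{+}$; thus $y=1/h_{t}(r)$ is a real-analytic change of variable and the density is a rational function of these quantities with non-vanishing denominator.

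For part (3), differentiating under the integral gives $g''(r)=\int_{0}^{\infty}\frac{2(r+2s)(1+s^{2})}{(r-s)^{4}}\,d\rho(s)>0$ wherever $g$ is finite, so $g$ is convex on each component of $\{g<\infty\}$ and, in particular, has no finite local maximum on $(0,\infty)$. Therefore, as $t$ increases the threshold $1/(t-1)$ decreases and the superlevel set $V_{t}^{+}=\{g>1/(t-1)\}$ can only have its components grow and merge --- never split and never spawn a new one --- so the number of components of $V_{t}^{+}$, hence (as $h_{t}$ is a homeomorphism of $[0,\infty)$) of $\overline{S_{t}}=\mathrm{supp}(\nu^{\mathrm{ac}})$, is non-increasing in $t$. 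I expect the main obstacle to be the boundary step of the second paragraph: securing the continuous boundary extension of $\omega_{t}$ together with the precise form of $\partial\Omega_{t}$, and identifying where $\eta_{\mu}$ has non-real boundary values with the bulk of $\mathrm{supp}(\rho)$ (equivalently with $h_{t}(V_{t}^{+})$). Once that geometric picture is in place, the density formula, its analyticity, and the component count are essentially computations.
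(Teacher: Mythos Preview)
This theorem is not proved in the present paper: it is quoted verbatim as one of the main results of \cite{HZ2014} (see the sentence immediately preceding the statement), so there is no proof here to compare your attempt against.

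That said, your outline is the natural one and matches the strategy of \cite{HZ2014}: Stieltjes inversion through \refto{identity}, transported to $\partial\Omega_t$ via $\eta_{\mu^{\boxtimes t}}=\eta_\mu\circ\omega_t$ and $\omega_t=\Phi_t^{-1}$, together with the convexity of $g$ on the complement of $V_t^+$ for the component count. A few points would need tightening for a self-contained proof. The continuous boundary extension of $\omega_t$ to $(0,\infty)$ requires an argument (Carath\'eodory, after verifying $\partial\Omega_t$ is locally a graph), and the claim that $\eta_\mu$ has real boundary values at the preimages $\omega_t(x)\in(0,\infty)\setminus V_t^+$ is precisely where one needs $\rho(I)=0$ on intervals $I\subset(V_t^+)^c$, i.e.\ Lemma~\ref{lemma:3.2}; you allude to this but do not invoke it. In part~(2) your identity $\eta_\mu(re^{iA_t(r)})=l_t(r)e^{i\theta_t(r)}$ has a sign slip in the modulus: from $\eta_\mu(w)=we^{-u(w)}$ one gets $|\eta_\mu(re^{iA_t(r)})|=re^{-\Re u}$, while $l_t(r)=re^{+\Re u}$ as defined; fortunately the density expression is invariant under $l\mapsto 1/l$, so the stated formula is still correct. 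In part~(3) the step ``$g''>0$ wherever $g$ is finite'' is not justified by differentiating under the integral (finiteness of $\int(r-s)^{-2}\,d\rho$ does not yield finiteness of $\int(r-s)^{-4}\,d\rho$); the clean route is again Lemma~\ref{lemma:3.2}, which gives $\rho(I)=0$ on each component $I$ of $(V_t^+)^c$, whence $g$ is smooth there and your second-derivative computation goes through.
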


\section{Continuity of free convolution semigroups }
In this section, we assume that the probability measure $\mu$  on $[0,\infty)$
is compactly supported.
\begin{lemma}\label{lemma:3.1}
Let $\mu$ be a probability measure on $[0, \infty)$. Set $\kappa_\mu(z)=z/\eta_\mu(z)$ and
write $\kappa_\mu(z)=\exp[u(z)]$, where $u$ is give by (\ref{eq:rep-2.1}). Then 
$\lim_{x\rightarrow 0^-} \kappa(x)=1/m_1(\mu)$ and
\[
  \int_0^\infty \frac{1}{s}d\rho(s)=\log m_1(\mu)+a. 
\]
Moreover, when $m_1(\mu)=1$, we have
\[\int_0^\infty\frac{1+s^2}{s^2}\,d\rho(s)=V,\]
where $V$ is the variance of $\mu$.
\end{lemma}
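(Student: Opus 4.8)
The plan is to analyze the behavior of $\kappa_\mu(z) = z/\eta_\mu(z)$ near $z = 0^-$ and read off the claimed identities from the integral representation \refto{rep-2.1}. First I would compute $\lim_{x\to 0^-}\kappa_\mu(x)$. Since $\psi_\mu(z) = \int_0^\infty \frac{zs}{1-zs}\,d\mu(s) = m_1(\mu)z + o(z)$ as $z\to 0$ (using compact support, or more carefully dominated convergence on $(-\infty,0)$ since $\mu$ need not be compactly supported here — one expands $\frac{zs}{1-zs} = zs(1 + o(1))$ uniformly as $z\to 0^-$), we get $\eta_\mu(z) = \psi_\mu(z)/(1+\psi_\mu(z)) = m_1(\mu)z + o(z)$, hence $\kappa_\mu(z) = z/\eta_\mu(z) \to 1/m_1(\mu)$. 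Taking logarithms, $u(x) = \log\kappa_\mu(x) \to -\log m_1(\mu)$ as $x\to 0^-$.

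Next I would evaluate $\lim_{x\to 0^-} u(x)$ directly from \refto{rep-2.1}: $u(x) = a + \int_0^\infty \frac{1+xs}{x-s}\,d\rho(s)$. As $x\to 0^-$, the integrand $\frac{1+xs}{x-s}$ tends pointwise to $-1/s$, and it is dominated (for $x\in(-1,0)$, say) by an integrable function with respect to the finite measure $\rho$ — one checks $\left|\frac{1+xs}{x-s}\right|$ is bounded uniformly on compact $s$-intervals and behaves like $\frac{1}{s}$ for large $s$ while $|x| < 1$, so dominated convergence applies. Therefore $\lim_{x\to 0^-} u(x) = a - \int_0^\infty \frac1s\,d\rho(s)$. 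Combining with the previous paragraph gives $-\log m_1(\mu) = a - \int_0^\infty \frac1s\,d\rho(s)$, i.e. $\int_0^\infty \frac1s\,d\rho(s) = \log m_1(\mu) + a$.

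For the variance identity, assume $m_1(\mu) = 1$, so $\kappa_\mu(0^-) = 1$ and $u(0^-) = 0$, equivalently $a = \int_0^\infty \frac1s\,d\rho(s)$. I would compute the next-order term. From $\psi_\mu(z) = z + m_2(\mu)z^2 + o(z^2)$ one obtains $\eta_\mu(z) = z + (m_2(\mu) - 1)z^2 + o(z^2) = z + Vz^2 + o(z^2)$ since $V = m_2(\mu) - m_1(\mu)^2 = m_2(\mu) - 1$, and hence $\kappa_\mu(z) = z/\eta_\mu(z) = 1 - Vz + o(z)$, so $u(z) = \log\kappa_\mu(z) = -Vz + o(z)$, giving $u'(0^-) = -V$. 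On the other hand, differentiating \refto{rep-2.1} under the integral sign, $u'(z) = \int_0^\infty \frac{d}{dz}\frac{1+zs}{z-s}\,d\rho(s) = \int_0^\infty \frac{-(1+s^2)}{(z-s)^2}\,d\rho(s)$ (after the algebra $\frac{d}{dz}\frac{1+zs}{z-s} = \frac{s(z-s) - (1+zs)}{(z-s)^2} = \frac{-s^2-1}{(z-s)^2}$). Letting $z\to 0^-$ with dominated convergence (the integrand is dominated by a $\rho$-integrable function near $z=0$, again using $s^{-2}$ decay and the finiteness needed — this is where one must be slightly careful, possibly invoking that the expansion $\psi_\mu(z) = z + m_2 z^2 + o(z^2)$ already forces $\int \frac{1+s^2}{s^2}\,d\rho(s) < \infty$), we get $u'(0^-) = -\int_0^\infty \frac{1+s^2}{s^2}\,d\rho(s)$. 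Equating the two expressions yields $\int_0^\infty \frac{1+s^2}{s^2}\,d\rho(s) = V$.

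The main obstacle is justifying the interchange of limit/derivative and integral uniformly near $z = 0^-$, and in particular establishing a priori that $\int_0^\infty \frac{1+s^2}{s^2}\,d\rho(s)$ is finite before claiming it equals $V$; this finiteness should follow from the compact support of $\mu$ (which gives all moments and forces the measure $\rho$ to decay appropriately — indeed $\rho$ is then compactly supported away from a neighborhood of the spectrum, or at least integrates $s^{-2}$), or alternatively one works with real $x < 0$ throughout and uses monotone convergence for the positive integrands $\frac{1+s^2}{(x-s)^2}$ as $x\uparrow 0$ to get the finiteness for free. Everything else is a routine Laurent/Taylor expansion of $\psi_\mu$ near the origin matched against term-by-term differentiation of the Pick–Nevanlinna-type representation of $u$.
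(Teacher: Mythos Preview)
Your proposal is correct and follows essentially the same approach as the paper: compute $\lim_{x\to 0^-}\kappa_\mu(x)=1/m_1(\mu)$ from the Taylor expansion of $\eta_\mu$, match it against $\lim_{x\to 0^-}u(x)=a-\int_0^\infty s^{-1}\,d\rho(s)$ from the integral representation, then differentiate both expressions and equate $u'(0^-)=-V$ with $-\int_0^\infty\frac{1+s^2}{s^2}\,d\rho(s)$. The paper justifies the derivative limit by Monotone Convergence (exactly the alternative you mention at the end), which also handles the finiteness issue you flag without appealing to compact support.
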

\begin{proof}
Observe that $\lim_{r\rightarrow 0^-}\eta_\mu(r)/r=m_1(\mu)$ by the definition of $\eta_\mu$. We write $u$ as
\[
  u(z)=a-z\int_0^\infty1\,d\rho(s)-(z^2+1)\int_0^\infty\frac{1}{s-z}\,d\rho(s),
\]
and hence $\lim_{r\rightarrow 0^-}u(r)=a-\int_0^\infty 1/s \,d\rho(s)$. We then deduce the first equation. 

We calculate
\begin{align*}
u'(r)=-\int_0^\infty \frac{s^2+1}{(r-s)^2}\,d\rho(s),
\end{align*} 
for all $r<0$ and, by Monotone Convergence Theorem,
\[
\lim_{r\rightarrow 0^-}u'(r)=-\int_0^\infty \frac{s^2+1}{s^2}\,d\rho(s).
\]
On the other hand, when $m_1(\mu)=1$, we have
$u(z)=-\ln\left({\eta_\mu(z)}/{z}\right)=-\ln(1+V z+o(z))$,
which yields that
$u'(0^-)=-V$. 
Therefore, we deduce that
\[
 V=\int_0^\infty\frac{s^2+1}{s^2}\,d\rho(s).
\]
This finishes the proof. 
\end{proof}

The following result is from \cite{HZ2014}.
\begin{lemma}\label{lemma:3.2}
Let $I$ be an open interval contained in $(V_t^+)^c$, then $\rho(I)=0$ and $g$ is strictly convex on $I$. 
\end{lemma}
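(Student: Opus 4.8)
The plan is to prove the two claims in sequence, as the first feeds into the second. First I would show $\rho(I) = 0$. Recall $V_t^+ = \{r : g(r) > 1/(t-1)\}$, so $I \subset (V_t^+)^c$ means $g(r) \le 1/(t-1) < \infty$ for all $r \in I$; in particular $g(r) = \int_0^\infty r(s^2+1)/(r-s)^2 \, d\rho(s)$ is finite throughout $I$. If $\rho$ had positive mass on some compact subinterval $[a,b] \subset I$, then for $r$ in the interior of $[a,b]$ the integrand $r(s^2+1)/(r-s)^2$ would blow up like $1/(r-s)^2$ near $s = r$, and a standard argument (Fubini, or integrating $g(r)$ against $dr$ over a small interval around an interior point) would force $g(r) = +\infty$ on a set of positive measure in $I$, contradicting finiteness. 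So $\rho(I) = 0$.

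Next, with $\rho(I) = 0$ in hand, I would examine $g(r,\theta)$ and $g(r)$ directly on $I$. Since $\rho$ gives no mass to $I$, all the integrals defining $u$, $g(r,\theta)$, and $g(r)$ are over $[0,\infty) \setminus I$, where the kernels are smooth in $r$ for $r \in I$ and decay in $s$ (using compact support of $\mu$, hence of $\rho$, to control the tail). This lets me differentiate $g(r)$ twice under the integral sign:
\[
g(r) = \int_{[0,\infty)\setminus I} \frac{r(s^2+1)}{(r-s)^2}\, d\rho(s),
\]
and I would compute $\partial_r^2 \big[ r(s^2+1)/(r-s)^2 \big]$ and check it is strictly positive for $r \in I$, $s \notin I$ (equivalently $s \ne r$, and in fact $s$ bounded away from $I$). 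A direct computation gives $\partial_r^2 [ r/(r-s)^2 ] = 2s(s + 2r)/(r-s)^4$ up to the $(s^2+1)$ factor, wait — I should just verify the sign carefully; since $s \ge 0$ and $r > 0$ the relevant second derivative of the kernel is nonnegative, and strictly positive on a set of positive $\rho$-measure (as long as $\rho \ne 0$; if $\rho = 0$ then $u$ is constant, $g \equiv 0$, and the strict convexity claim should be interpreted appropriately or the hypothesis is vacuous). Integrating, $g$ is strictly convex on $I$.

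The main obstacle I anticipate is the first step: justifying rigorously that finiteness of $g$ on all of $I$ forces $\rho(I) = 0$, since $g$ is only asserted finite pointwise and one must rule out a measure concentrated on a dense null set or a Cantor-type set inside $I$. The clean way is an integration argument: for any $[\alpha,\beta] \subset I$,
\[
\int_\alpha^\beta g(r)\, dr = \int_0^\infty \left( \int_\alpha^\beta \frac{r(s^2+1)}{(r-s)^2}\, dr \right) d\rho(s)
\]
by Tonelli, and the inner integral diverges (like $\int (r-s)^{-2}\,dr$) precisely when $s \in (\alpha,\beta)$; so if $\rho((\alpha,\beta)) > 0$ the left side is $+\infty$, forcing $g = +\infty$ on a positive-measure subset of $[\alpha,\beta]$, contradicting $g \le 1/(t-1)$ there. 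Letting $[\alpha,\beta]$ exhaust $I$ gives $\rho(I) = 0$. Once this is secured, the convexity step is a routine differentiation-under-the-integral argument using the compact support of $\rho$.
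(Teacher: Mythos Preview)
The paper does not actually prove this lemma; it is quoted verbatim from \cite{HZ2014} with no argument given. So there is no ``paper's own proof'' to compare against, and your proposal stands on its own as a self-contained justification.

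Your argument is essentially correct. The Tonelli step for $\rho(I)=0$ is clean: since $g\le 1/(t-1)$ on $I$, for any $[\alpha,\beta]\subset I$ one has $\int_\alpha^\beta g(r)\,dr<\infty$, while Tonelli together with the divergence of $\int_\alpha^\beta r(r-s)^{-2}\,dr$ for $s\in(\alpha,\beta)$ forces $\rho((\alpha,\beta))=0$. For the convexity step, a direct computation gives
\[
\partial_r^2\!\left[\frac{r}{(r-s)^2}\right]=\frac{2(r+2s)}{(r-s)^4},
\]
which is strictly positive for $r>0$, $s\ge 0$; your written formula $2s(s+2r)/(r-s)^4$ is off, but your instinct about the sign is right. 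One correction: you assert that compact support of $\mu$ implies compact support of $\rho$, but this is false in general (the paper itself notes in Proposition~\ref{prop:3.5} that $\rho$ can fail to be compactly supported). Fortunately this does not matter for your argument: $\rho$ is a \emph{finite} measure, and for $r$ in a compact subset of $I$ the kernel $r(s^2+1)/(r-s)^2$ and its $r$-derivatives are uniformly bounded in $s$ (tending to a finite limit as $s\to\infty$), so differentiation under the integral is justified by dominated convergence without any compactness of $\operatorname{supp}(\rho)$.
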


\begin{proposition}\label{prop:3.4}
Let $\mu$ be a compactly supported probability measure on $[0, \infty)$ and $1<\alpha<\beta$. Write $\kappa_\mu(z)=\exp[u(z)]$, where $u$ is an analytic function given by (\ref{eq:rep-2.1}). 
\begin{enumerate}[$(1)$]
\item Then there exists $a>0$ such that $[0,a)\cap\mathrm{supp}(\rho)=\emptyset$. 
Moreover, the set $V_t^+$ is uniformly bounded away from zero and increasing for all $t\in (\alpha, \beta)$.
\item Given any $b>0$, the sets $V_t^+\cap [0,b]$
are Hausdorff continuous with respect to $t$. 
\end{enumerate}
\end{proposition}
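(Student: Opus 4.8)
Throughout I assume $\mu$ is not a Dirac mass, since otherwise $\rho=0$, $g\equiv0$, $V_t^+=\emptyset$, and everything is trivial.

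\emph{Part (1).} First I would show $\mathrm{supp}(\rho)$ is bounded away from $0$. Since $\mu$ is compactly supported, $\psi_\mu$ is analytic near $0$ with $\psi_\mu(0)=0$ and $\psi_\mu'(0)=m_1(\mu)>0$, so $\eta_\mu$ and $\kappa_\mu=z/\eta_\mu$ extend analytically and without zeros to a disc around $0$, with $\kappa_\mu(0)=1/m_1(\mu)>0$ by Lemma~\ref{lemma:3.1}. Hence $u=\log\kappa_\mu$ continues analytically across a symmetric real segment $(-a,a)$, and since $\kappa_\mu>0$ on $(-a,0)$ this continuation is real there, hence real on all of $(-a,a)$ by the identity theorem. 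By the uniqueness of the representation \refto{rep-2.1} (Stieltjes--Perron inversion applied to $\Im u$) this forces $\rho([0,a))=0$. Granting this, for $0<r<a/2$ and $s\ge a$ one has $(s-r)^2\ge s^2/4$, so
\[
g(r)=\int_0^\infty\frac{r(s^2+1)}{(s-r)^2}\,d\rho(s)\le 4r\int_0^\infty\frac{s^2+1}{s^2}\,d\rho(s)=:4rC ,
\]
with $C<\infty$ by the preceding together with Lemma~\ref{lemma:3.1} ($C=V$ when $m_1(\mu)=1$). Thus $g(r)<1/(\beta-1)\le 1/(t-1)$ as soon as $r<\delta:=\min\{a/2,(4C(\beta-1))^{-1}\}$, giving $V_t^+\subseteq[\delta,\infty)$ uniformly for $t\in(\alpha,\beta)$; and since $t\mapsto 1/(t-1)$ is strictly decreasing, $s<t$ implies $V_s^+=\{g>1/(s-1)\}\subseteq\{g>1/(t-1)\}=V_t^+$.

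\emph{Part (2).} Fix $t_0\in(\alpha,\beta)$ and $b>0$. By part (1) the sets $K_t:=\overline{V_t^+}\cap[0,b]$ are compact subsets of the fixed interval $[\delta,b]$ and increase with $t$. For an increasing family of compact sets in a fixed bounded interval the one-sided Hausdorff limits at $t_0$ always exist and equal $\overline{\bigcup_{s<t_0}K_s}$ and $\bigcap_{s>t_0}K_s$; hence continuity at $t_0$ reduces to the single equality $\overline{\bigcup_{s<t_0}K_s}=\bigcap_{s>t_0}K_s$, whose common value is then necessarily $K_{t_0}$. The left limit is easy: since $\bigcup_{s<t_0}\{g>1/(s-1)\}=\{g>1/(t_0-1)\}=V_{t_0}^+$ and $\overline{V_s^+}\subseteq\overline{V_{t_0}^+}$ for $s<t_0$, one gets $V_{t_0}^+\subseteq\bigcup_{s<t_0}\overline{V_s^+}\subseteq\overline{V_{t_0}^+}$, so $\overline{\bigcup_{s<t_0}\overline{V_s^+}}=\overline{V_{t_0}^+}$, and intersecting with $[0,b]$ gives $\overline{\bigcup_{s<t_0}K_s}=K_{t_0}$. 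For the right limit it suffices to prove $\bigcap_{s>t_0}\overline{V_s^+}\subseteq\overline{V_{t_0}^+}$, the reverse being monotonicity. Suppose $r^*\notin\overline{V_{t_0}^+}$; then $r^*$ lies in an open interval $I\subseteq(V_{t_0}^+)^c$, so by Lemma~\ref{lemma:3.2} $\rho(I)=0$ and $g$ is strictly convex --- in particular finite and continuous --- on $I$, with $g\le 1/(t_0-1)$ there. Strict convexity forbids $g$ from attaining $1/(t_0-1)$ at the interior point $r^*$, so $g(r^*)<1/(t_0-1)$; picking $s>t_0$ with $g(r^*)<1/(s-1)$ and using continuity of $g$ near $r^*$ gives $g<1/(s-1)$ on a neighbourhood of $r^*$, which is then disjoint from $V_s^+$. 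Hence $r^*\notin\overline{V_s^+}$, so $r^*\notin\bigcap_{s>t_0}\overline{V_s^+}$; intersecting with $[0,b]$ yields $\bigcap_{s>t_0}K_s=K_{t_0}$, and continuity follows.

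\emph{Main obstacle and remarks.} The hard part is the right-hand limit: a priori, as the threshold $1/(t-1)$ decreases through a near-flat stretch of $g$, a whole interval could enter $V_t^+$ abruptly and destroy continuity; Lemma~\ref{lemma:3.2} is precisely the input that rules this out, forcing $g$ to be strictly convex --- hence to cross every level transversally with no interior maxima --- on each component of $(V_t^+)^c$. The remaining points, which I would treat with routine estimates rather than new ideas, are: the identification of $\mathrm{supp}(\rho)$ near $0$ via analytic continuation and uniqueness of \refto{rep-2.1}; and the bookkeeping tied to the cutoff $b$ (the argument above proves Hausdorff continuity of $t\mapsto\overline{V_t^+}$ outright, and for a general $b$ one uses that $V_t^+$ is bounded for $t$ in a compact window when $0\notin\mathrm{supp}(\mu)$, and otherwise the homeomorphism property $h_t(r)\to\infty$ to control the truncated tail).
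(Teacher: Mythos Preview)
Your proof is correct and rests on the same two ingredients as the paper: analytic continuation of $\kappa_\mu$ across a neighbourhood of $0$ together with Stieltjes inversion to get $\rho([0,a))=0$ for part~(1), and Lemma~\ref{lemma:3.2} (strict convexity of $g$ on each component of $(V_t^+)^c$) as the crux of part~(2).

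The organization of part~(2) differs from the paper's. The paper argues by a single sequential contradiction: if $t_n\to t$ and some $r_n\in V_{t_n}^+\cap[0,b]$ stays $\epsilon$-away from $V_t^+$, then a subsequential limit $r$ lies in the interior of a component of $(V_t^+)^c$ where $g$ is continuous (since $\rho$ vanishes there), giving $g(r)\ge 1/(t-1)$, which strict convexity then contradicts. You instead exploit the monotonicity of $t\mapsto V_t^+$ to reduce Hausdorff continuity to computing the left and right limits $\overline{\bigcup_{s<t_0}K_s}$ and $\bigcap_{s>t_0}K_s$ separately, handling the left limit by the set identity $\bigcup_{s<t_0}\{g>1/(s-1)\}=\{g>1/(t_0-1)\}$ and the right limit by the same strict-convexity argument. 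Your framing is a bit more systematic and makes both halves of the Hausdorff distance explicit (the paper's written proof addresses only one containment), at the cost of a little extra bookkeeping with the cutoff at $b$; the paper's version is shorter but relies on the reader to supply the other direction from monotonicity. Substantively the two arguments are the same.
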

\begin{proof}
By the definition of $\eta$-transform, we have $\eta_\mu(0)=0$ and $\eta_\mu'(0)=m_1(\mu)<\infty$. 
The identity 
\[
  \frac{z}{1-\eta_\mu(z)}=G_\mu\left(\frac{1}{z}\right)=
    \int_0^\infty \frac{1}{1/z-x}\,d\mu(x),
\]
implies that $\eta_\mu$ is real on some interval $[0,a)$ under the assumption that $\mu$ is compactly supported. It follows that $u$ is also real on $[0,a)$ and hence $\rho([0,a))=0$
by Stieltjes inversion formula. 
Hence, the function $g$ is finite in a neighborhood of zero and $g(r)=\int_0^\infty\frac{r(s^2+1)}{(r-s)^2}\;d\rho(s)\rightarrow 0$ as $r\rightarrow 0$.
We see that $V_t^+$ is bounded away from $0$. 
The definition of $V_t^+$ immediately implies that $V_{t_1}^+\subset V_{t_2}^+$ if $t_1<t_2$. 
 This proves the first assertion. 

Assume now $t_n \rightarrow t \in (\alpha, \beta)$ and $(V_{t_n}^+\cap [0,b])\not\subset B_\epsilon (V_t^+\cap [0,b])$, where $B_\epsilon (V_t^+\cap [0,b])$
is a $\epsilon$-neighborhood of the set $V_t^+\cap [0,b]$.
We then have a series $r_n \in (V_{t_n}^+\cap [0,b])\backslash B_\epsilon (V_t^+)$.
We may assume that 
$r_n \rightarrow r$ by passing to a subsequence if necessary. Lemma 3.3 implies that
$\mathrm{supp}( \rho) \subset B_\epsilon (V_t^+)$ and hence we can take the limit
\[
g(r)=\lim_{n\rightarrow\infty} g(r_n)= \lim_{n\rightarrow\infty}\frac{1}{t_n-1}.
\]
On the other hand, $g\leq  \frac{1}{t-1}$ and $g$ is strictly convex on any open interval contained in $(V_t^+)^c$.
This contradiction proves the second claim. 
\end{proof}

\begin{proposition}\label{prop:3.4}
Given $b>0$, the graphs $\{ re^{i A_t(r)}: r\in V_t^+\cap (0, b) \}$ are continuous in the Hausdorff metric for $t\in (1, \infty)$. 
\end{proposition}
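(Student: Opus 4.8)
The plan is to reduce the statement to the \emph{joint} continuity in $(t,r)$ of the angle function $A_t(r)$, and then to combine this with the Hausdorff continuity of the sets $V_t^+\cap[0,b]$ proved in the preceding proposition. Write $\Gamma_t$ for the graph in the statement. It is convenient to extend the angle function to all of $(1,\infty)\times(0,\infty)$ by declaring $\tilde A_t(r):=A_t(r)$ if $r\in V_t^+$ and $\tilde A_t(r):=0$ otherwise; set $\Psi_t(r):=re^{i\tilde A_t(r)}$ and $K_t:=\overline{V_t^+\cap[0,b]}$. I will show that $\tilde A$ is continuous, that $\overline{\Gamma_t}=\Psi_t(K_t)$, and then invoke the fact that a jointly continuous map sends a Hausdorff continuous family of compact sets to a Hausdorff continuous family.

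First I would record the elementary properties of the function $g$ from \eqref{eq:02.3}. The denominator $r^2-2rs\cos\theta+s^2$ is bounded below by $2rs(1-\cos\theta)$, so for $r$ in a compact subset of $(0,\infty)$ and $\theta$ in a compact subinterval of $(0,\pi)$ the integrand is uniformly bounded and dominated convergence gives joint continuity of $g$ on $(0,\infty)\times(0,\pi)$. Writing $g(r,\theta)$ as the product of the strictly decreasing factor $r\sin\theta/\theta$ with the (in $\theta$) decreasing factors $(s^2+1)/(r^2-2rs\cos\theta+s^2)$ shows that $\theta\mapsto g(r,\theta)$ is strictly decreasing on $(0,\pi)$ as soon as $\rho\neq0$ (if $\rho=0$ every $V_t^+$ is empty and the statement is trivial). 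Since $g(r,\theta)\uparrow g(r)\in(0,\infty]$ as $\theta\downarrow0$ and $g(r,\theta)\to0$ as $\theta\to\pi^-$, it follows that $A_t(r)$ is the unique solution in $(0,\pi)$ of $g(r,\theta)=1/(t-1)$ when $r\in V_t^+$, whereas $g(r,\theta)<g(r)\le 1/(t-1)$ for all $\theta>0$ when $r\notin V_t^+$, so that $\tilde A_t(r)=A_t(r)=0$ in that case.

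Next I would prove that $\tilde A$ is jointly continuous; this is the heart of the matter. Let $(t_n,r_n)\to(t,r)$. If $r\in V_t^+$, put $\theta_0=A_t(r)$; by strict monotonicity $g(r,\theta_0-\delta)>1/(t-1)>g(r,\theta_0+\delta)$ for small $\delta>0$, and joint continuity of $g$ at $(r,\theta_0\pm\delta)$ carries these strict inequalities over to $(r_n,t_n)$ for large $n$. Since $g(r_n)=\sup_{\theta\in(0,\pi)}g(r_n,\theta)\ge g(r_n,\theta_0-\delta)>1/(t_n-1)$ we get $r_n\in V_{t_n}^+$ and $A_{t_n}(r_n)\in(\theta_0-\delta,\theta_0+\delta)$, hence $\tilde A_{t_n}(r_n)\to\theta_0$. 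If instead $r\notin V_t^+$ and $\tilde A_{t_n}(r_n)$ does not tend to $0$, pass to a subsequence with $\tilde A_{t_n}(r_n)\ge\delta>0$; then necessarily $r_n\in V_{t_n}^+$ and, by monotonicity, $g(r_n,\delta)\ge g(r_n,A_{t_n}(r_n))=1/(t_n-1)$, so letting $n\to\infty$ and using continuity of $g$ at $(r,\delta)$ gives $g(r,\delta)\ge1/(t-1)$, contradicting $g(r,\delta)<g(r)\le1/(t-1)$. The case $r=0$ is immediate since $V_t^+$ avoids a fixed neighbourhood of $0$. Thus $\Psi\colon(1,\infty)\times[0,b]\to\mathbb C$ is jointly continuous.

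Finally I would assemble the pieces. For fixed $t$, $\tilde A_t$ is continuous in $r$, so $\overline{\Gamma_t}=\Psi_t(K_t)$: indeed $\Gamma_t\subseteq\Psi_t(K_t)$ and $\Psi_t(K_t)$ is compact, while any boundary point $r$ of $V_t^+$ belongs to $\overline{\Gamma_t}$ because $A_t(r')\to 0$ as $r'\to r$ inside $V_t^+$, and no further points appear because $V_t^+$ is bounded away from $0$. By the preceding proposition the nonempty compact sets $K_t=\overline{V_t^+\cap[0,b]}$ vary Hausdorff continuously and lie in $[0,b]$; since a jointly continuous map carries a Hausdorff continuous family of nonempty compact subsets of the compact space $[0,b]$ to a Hausdorff continuous family (a standard uniform-continuity argument), applying this to $\Psi$ and $(K_t)_t$ proves the claim; when $V_t^+=\emptyset$ near the parameter in question the statement is vacuous. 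The one delicate point is exactly the joint continuity of $\tilde A$ at a boundary point of $V_t^+$, where $A_t(r)$ collapses to $0$: there a naive implicit-function argument is unavailable, and one must use the strict monotonicity of $g(r,\cdot)$ to prevent the angle of $(r_n,t_n)$ from staying bounded away from $0$.
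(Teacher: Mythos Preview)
Your argument is correct and slightly cleaner than the paper's. The paper does not extend $A_t$ by zero and prove joint continuity of a single function; instead it works directly with $A_t$ on two regions. It fixes a threshold $c>0$, defines $V_{t,c}^+=\{r\in V_t^+:A_t(r)\ge c\}$, and proves the multiplicative sandwich $(1-\epsilon)A_t(r)\le A_s(r)\le(1+\epsilon)A_t(r)$ for $r\in V_{t,c}^+\cap(0,b)$ and $|s-t|$ small (by the same sequence-and-contradiction mechanism you use), and separately shows that $\sup\{A_s(r):r\in(0,b)\setminus V_{t,c}^+\}\le 2c$ for $|s-t|$ small. It then combines these two estimates with the Hausdorff continuity of $V_t^+\cap[0,b]$. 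Your route packages the same monotonicity/continuity facts about $g$ into a single statement---joint continuity of the extended angle $\tilde A$---and then invokes the general principle that a jointly continuous map carries Hausdorff-continuous compact families to Hausdorff-continuous families. The paper's decomposition yields uniform (in $r$) multiplicative control of $A_s/A_t$ on the bulk, which is a bit more than you need but potentially handy for later quantitative work; your version is more conceptual and avoids the somewhat artificial cutoff at level $c$.
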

\begin{proof}
For $0<c<\pi$, we define $V_{t,c}^+=\{r\in V_t^+: A_t(r)\geq c \}$.
Given $\epsilon>0$, we will start to prove that there exists $\delta>0$ such that
\begin{equation}\label{eq:002.1}
 A_{t}(r)<A_{s}(r)\leq (1+\epsilon)A_t(r)
\end{equation}
for all $r\in V_{t,c}^+\cap (0,b)$ if $0<s-t<\delta$. 
The first inequality follows from the fact that the function $g(r,\theta)$ is a decreasing function of $\theta$. To prove the second inequality 
by contradiction, we assume that there exists a series $t_n>t$, $t_n\rightarrow t$ and $r_n, r \in \overline{V_{t,c}^+\cap (0,b)}$ such that $r_n\rightarrow r$ and 
$A_{t_n}(r_n) >(1+\epsilon) A_t(r_n)$. We then have
\[
 g(r_n, (1+\epsilon)A_t(r_n))\geq g(r_n, A_{t_n}(r_n))=\frac{1}{t_n-1}.
\]
As $A_t(r_n)\geq c$, we can take the limit and obtain
\[
g(r, (1+\epsilon)A_t(r))\geq \frac{1}{t-1},
\]
due to the fact that the integrand in (\ref{eq:02.3}) is bounded away from zero and $A_t$ is continuous. 
On the other hand, we have $g(r, A_t(r))=\frac{1}{t-1}$ and $g(r,\theta)$ is a strictly decreasing function of $\theta$. 
This contradiction yields the second inequality in (\ref{eq:002.1}).

Given $\epsilon>0$, using the similar argument as above, we can prove that 
there exists $\delta>0$ such that
\begin{equation}\label{eq:002.2}
 (1-\epsilon)A_t(r) \leq A_{s}(r)<A_{t}(r)
 \end{equation}
for all $r\in V_{t,c}^+\cap (0,b)$ if $-\delta<s-t<0$. 

We claim that 
\[\sup\{A_{s}(r): r\in (0, b)\backslash V_{t,c}^+\}\leq 2c\]
 if $s-t$ is small enough. Assume that is not the case, then there 
 exists a series $t_n \rightarrow t$ and $r_n\rightarrow r$, where
$r_n \notin V_{t,c}^+\cap (0,b)$, such that $A_{t_n}(r_n)>2c$. We have
\[
  g(r_n, 2c) \geq g(r_n, A_{t_n}(r_n))=\frac{1}{t_n-1}.
\]
Taking the limit, we have $g(r,2c)\geq 1/(t-1)$, which implies
that $A_t(r)\geq 2c$. As the cluster set of $r_n\notin V_{t,c}^+$, $r$ is either not in $V_{t,c}^+$
or an end point of $V_{t,c}^+$, it must satisfy $A_t(r)\leq c$. 
This contradiction proves our claim. 

The desired assertion follows by above results and applying Proposition \ref{prop:3.4}.
\end{proof}

\begin{proposition}\label{prop:3.5}
Let $\mu$ be a compactly supported probability measure on $[0, \infty)$. 
If $\mu(\{ 0\})>1$, then $\mu^{\boxtimes t}(\{ 0\})=\mu(\{ 0\})$ for all $t>1$. 
If $0\in \mathrm{supp}((\mu^{\boxtimes s})^{ac})$ for some $s>1$, then
$0\in \mathrm{supp}((\mu^{\boxtimes t})^{ac})$ for all $t>1$. 
\end{proposition}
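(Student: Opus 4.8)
The plan is to establish the two assertions by separate means: the statement on the atom at $0$ through the subordination function $\omega_t$ and the relation $\Phi_t(\omega_t(z))=z$, and the statement on the absolutely continuous mass near $0$ through the density formula of Theorem~\ref{density1} together with Lemma~\ref{lemma:3.2}.

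For the atom at $0$, I would first record the elementary fact that for any probability measure $\nu$ on $[0,\infty)$, dominated convergence in $\psi_\nu$ as $x\to-\infty$ gives $\psi_\nu(-\infty)=\nu(\{0\})-1$, hence $\eta_\nu(-\infty)=1-1/\nu(\{0\})$ and $\nu(\{0\})=(1-\eta_\nu(-\infty))^{-1}$ (with the convention $1/0=\infty$). The Dirac case being clear, assume $\mu$ is not a point mass; as $\mu(\{0\})>0$ this gives $\eta_\mu(-\infty)=1-1/\mu(\{0\})\in(-\infty,0)$, so $\kappa_\mu(w)=w/\eta_\mu(w)\to+\infty$ and therefore
\[
\Phi_t(w)=w\,\kappa_\mu(w)^{t-1}\longrightarrow-\infty\quad\text{as }w\to-\infty,
\]
while Lemma~\ref{lemma:3.1} gives $\kappa_\mu(w)\to1/m_1(\mu)$ and hence $\Phi_t(w)\to0$ as $w\to0^{-}$. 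Since $\omega_t$ is the $\eta$-transform of a $\boxtimes$-infinitely divisible measure on $[0,\infty)$ it maps $(-\infty,0)$ into $(-\infty,0)$, and $\Phi_t(\omega_t(z))=z$ for $z<0$; if $\omega_t(z)$ failed to tend to $-\infty$ as $z\to-\infty$, there would be $K>0$ and $z_n\to-\infty$ with $\omega_t(z_n)\in[-K,0)$, on which interval $\Phi_t$ is bounded, contradicting $\Phi_t(\omega_t(z_n))=z_n\to-\infty$. Hence $\omega_t(z)\to-\infty$, so $\eta_{\mu^{\boxtimes t}}(-\infty)=\lim_{z\to-\infty}\eta_\mu(\omega_t(z))=\eta_\mu(-\infty)$, and the identity above yields $\mu^{\boxtimes t}(\{0\})=(1-\eta_\mu(-\infty))^{-1}=\mu(\{0\})$.

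For the absolutely continuous mass near $0$, Theorem~\ref{density1} shows $(\mu^{\boxtimes t})^{\mathrm{ac}}$ is concentrated on $\overline{S_t}$ and has analytic, strictly positive density on $S_t$ away from the finitely many points where $\eta_{\mu^{\boxtimes t}}=1$, so $\mathrm{supp}((\mu^{\boxtimes t})^{\mathrm{ac}})=\overline{S_t}=\overline{\{1/h_t(r):r\in V_t^+\}}$. As $h_t$ is a homeomorphism of $[0,\infty)$ with $h_t(r)\to\infty$ when $r\to\infty$, the condition $0\in\overline{S_t}$ is equivalent to $\sup\{h_t(r):r\in V_t^+\}=\infty$, i.e.\ to $V_t^+$ being unbounded; thus the hypothesis says precisely that $V_s^+$ is unbounded. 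I would then show the unboundedness of $V_t^+$ is a $t$-independent condition, equivalent to $\mathrm{supp}(\rho)$ being unbounded: if $\mathrm{supp}(\rho)\subseteq[0,M]$ then
\[
g(r)\le \frac{r(M^2+1)}{(r-M)^2}\,\rho([0,M])\longrightarrow 0\quad\text{as }r\to\infty,
\]
so $g(r)<1/(t-1)$ eventually and $V_t^+$ is bounded; conversely, if $V_t^+\subseteq[0,M]$ then $(M,\infty)$ is an open interval in $(V_t^+)^c$, so $\rho((M,\infty))=0$ by Lemma~\ref{lemma:3.2}. Chaining the implications, $0\in\mathrm{supp}((\mu^{\boxtimes s})^{\mathrm{ac}})$ gives $V_s^+$ unbounded, hence $\mathrm{supp}(\rho)$ unbounded, hence $V_t^+$ unbounded for all $t>1$, hence $0\in\mathrm{supp}((\mu^{\boxtimes t})^{\mathrm{ac}})$.

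The routine parts---the dominated-convergence identity, the asymptotics of $\kappa_\mu$ and $\Phi_t$, the estimate of $g$---I would not spell out. The step I expect to be the main obstacle is the claim $\omega_t(z)\to-\infty$ as $z\to-\infty$ underpinning the first assertion: one must use that $\omega_t$ genuinely maps $(-\infty,0)$ into $(-\infty,0)$, so the limit is along the real axis, and that $\Phi_t$ is continuous and bounded on each $[-K,0]$, so that $\Phi_t\circ\omega_t=\mathrm{id}$ rules out a finite limit. In the second assertion the only delicate point is the equivalence $0\in\mathrm{supp}((\mu^{\boxtimes t})^{\mathrm{ac}})\Leftrightarrow 0\in\overline{S_t}$, which rests on the density being strictly positive on $S_t$ off the atoms of $\mu^{\boxtimes t}$.
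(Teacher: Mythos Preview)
Your proof is correct and follows essentially the same route as the paper's: the atom at $0$ is handled via the limit $\psi_\nu(-\infty)=\nu(\{0\})-1$ together with $\omega_t(r)\to-\infty$ and subordination, and the absolutely continuous part near $0$ is reduced to the $t$-independent dichotomy of whether $\mathrm{supp}(\rho)$ is bounded (equivalently, whether $V_t^+$ is bounded), invoking Theorem~\ref{density1} and Lemma~\ref{lemma:3.2}. Your write-up is in fact more careful than the paper's on two points: you justify $\omega_t(z)\to-\infty$ via the boundedness of $\Phi_t$ on compact subintervals of $(-\infty,0]$ and the identity $\Phi_t\circ\omega_t=\mathrm{id}$, where the paper simply asserts it, and you make the appeal to Lemma~\ref{lemma:3.2} explicit in showing that unboundedness of $V_t^+$ is equivalent to unboundedness of $\mathrm{supp}(\rho)$.
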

\begin{proof}
It follows from \cite{MR1639647} that $rG_\mu(r)\rightarrow \mu(\{ 0\})$
as $r\rightarrow 0^-$ on the negative half line.  
Hence, $\mu(\{ 0\})=1+\lim_{r\rightarrow -\infty}\psi_\mu (r)$.
For any $t>0$, as $\lim_{r\rightarrow -\infty}\omega_t(r)=-\infty$,
we have
\[
\mu^{\boxtimes t}(\{ 0\})=1+\lim_{r\rightarrow -\infty}\psi_{\mu} (\omega_t(r))=1+\lim_{r\rightarrow -\infty}\psi_\mu (r)
=\mu(\{0 \}).
\]

Assuming that $0\in \mathrm{supp}((\mu^{\boxtimes s})^{ac})$ for some $s>1$,
we claim that $\rho$ is not compactly supported. Indeed, if $\rho$ is compactly supported, then 
$\lim_{r\rightarrow \infty}g(r)=0$ and hence the set $A_s$ is compact and 
the set $\mathrm{supp}((\mu^{\boxtimes s})^{ac})$ is bounded away from zero by Theorem \ref{density1}, which contradicts to the assumption. 
The fact that the measure $\rho$ is not compactly supported in turn implies that $V_t$ is not compact and 
$0\in \mathrm{supp}((\mu^{\boxtimes t})^{ac})$ for all $t>1$ by according to Theorem \ref{density1}.
\end{proof}

We are now in a position to prove the main result of this section. 
\begin{proof}[Proof of Theorem \ref{thm:0.1}]
We first focus on the absolutely continuous part. 
Given $t_0>1$, $\delta\in (0,t_0-1)$ and choose $a$ such that $V_{t_0+\delta}^+\subset [a,\infty)$.
For $s\in (t_0-\delta, t_0+\delta)$, we will prove that
\begin{equation}\label{eq:3.3}
 \left| \frac{h_{t_0}(x)}{h_s(y)} \right|= \left| \frac{\Phi_{t_0}(xe^{i A_{t_0}(x)})}{\Phi_s(y e^{i A_s(y)})} \right|=O\left( \frac{x}{y} \right),
\end{equation}
for $x\in V_{t_0}^+$ and $y\in V_s^+\cap V_{t_0}^+$. We fix $x_0\in V_{t_0}^+$ and write
\begin{align*}
\frac{\Phi_{t_0}(x_0e^{i A_t(x_0)})}{\Phi_s(y e^{i A_s(y)})}&=\frac{\Phi_{t_0}(ye^{i A_s(y)})}{\Phi_s(y e^{i A_s(y)})}\frac{\Phi_{t_0}(x_0e^{i A_t(x_0)})}{\Phi_{t_0}(ye^{i A_s(y)})}.
\end{align*}
We observe that the function $(s,x)\rightarrow A_s(x)$ is continuous in a neighborhood of $(t_0, x_0)$ such that $x_0\in V_{t_0}^+$
by the definition (\ref{eq:02.3}). When $\delta$ is sufficiently small, we thus have
\[
\left | \frac{\Phi_{t_0}(ye^{i A_s(y)})}{\Phi_s(y e^{i A_s(y)})}  \right |=|\exp[(t_0-s) u(y e^{i A_s(y)})]|\leq \exp[(t_0-s) C],
\]
where $C$ is a constant depending on $\delta$ and $y$. 
To estimate the second factor, denote $z_1=x_0e^{i A_t(x_0)}$ and $z_2=ye^{i A_s(y)}$.
We have
\[
\left|\frac{\Phi_{t_0}(z_1)}{\Phi_{t_0}(z_2)}\right|=\left(\frac{x_0}{y}\right)\cdot \exp[(t_0-1)(u(z_1)-u(z_2))]
\]
and
\[
 u'(z)=-\int_0^\infty\frac{1+s^2}{(z-s)^2}d\sigma(s).
\]
For any $z=re^{i\theta}\in \Omega_{s}\cap \Omega_{t_0}$, we have
that 
\[
\frac{r\sin(\theta)}{\theta}\int_0^\infty \frac{s^2+1}{|z-s|^2}d\sigma(s)<\frac{1}{t_0-\delta-1},
\]
which yields that
\begin{align*}
|u'(z)|&\leq \int_0^\infty\frac{1+s^2}{|z-s|^2}|dz|\\
   &\leq \frac{\theta}{r\sin(\theta)}\cdot \frac{1}{t_0-\delta-1}.
\end{align*}
We now choose a curve $\gamma\subset  \Omega_{s}\cap \Omega_{t_0}$ connecting $z_1$ and $z_2$, and obtain 
\begin{align*}
| u(z_1)-u(z_2)|&=\left|\int_{\gamma} u'(z)dz\right|\leq \frac{1}{t_0-\delta-1}\int_{\gamma} \frac{\theta}{r\sin(\theta)}\left| dz\right|.
\end{align*}
The curve $\gamma$ can be chosen such that $\theta/(r\sin(\theta))$ is bounded
along $\gamma$ when $\delta$ is chosen to be small thanks to Proposition \ref{prop:3.4}. The length of $\gamma$ can be arbitrarily small if $x_0/y$ is. We then deduce (\ref{eq:3.3}).
Moreover, this estimation is uniform over compact subsets of $V_{t_0}^+\cap [a,\infty]$. 

Hence, for any point $h_{t_0}(x)=\Phi_{t_0}(xe^{iA_{t_0}(x)})$ with $x\in V_{t_0}^+\cap [a,b]$ and  $\delta$ small enough, there exists 
$y\in V_s^+$ such that $h_{t_0}(x)/h_s(y)$ is sufficiently small. 
Let $\epsilon>0$ be a small number
and choose $b$ such that $\epsilon>\max\{ 1/h_s(x):  x\in V_{t_0+\delta}^+\cap [a,b], |s-t_0|<\delta\}$. 
Given $\epsilon>0$ and $t_0>1$, we thus have
\begin{align*}
\mathrm{supp}((\mu^{\boxtimes t_0})^{\mathrm{ac}})\cap [\epsilon, \infty)\subset&\overline{ \{1/x: x=h_{t_0}(r), r\in V_{t_0}^+\cap [a, b] \}}\\
  &\subset B_\epsilon(\mathrm{supp}(\mu^{\boxtimes s})^{\mathrm{ac}}),
\end{align*}
and
\begin{align*}
\mathrm{supp}((\mu^{\boxtimes s})^{\mathrm{ac}})\cap [\epsilon, \infty)\subset&\overline{ \{1/x: x=h_s(r), r\in V_s^+\cap [a, b] \}}\\
  &\subset B_\epsilon(\mathrm{supp}(\mu^{\boxtimes t_0})^{\mathrm{ac}}),
\end{align*}
if $\delta$ is sufficiently small. 

To discuss the behavior of $\mathrm{supp}((\mu^{\boxtimes t})^{ac})$ close to zero, we have two cases. 
If $0\in \mathrm{supp}((\mu^{\boxtimes t})^{ac})$, then $0\in \mathrm{supp}((\mu^{\boxtimes s})^{ac})$
by Proposition \ref{prop:3.5}. If $0\notin \mathrm{supp}((\mu^{\boxtimes t})^{ac})$, then $\rho$ is bounded away from $\infty$ and 
thus $ \mathrm{supp}((\mu^{\boxtimes t})^{ac})$ is bounded away zero. In both cases,
above inclusions imply
the sets $\{ \mathrm{supp}(\mu^{\boxtimes t})^{\mathrm{ac}} \}$ is Hausdorff continuous. 

Finally, atoms of $\mu^{\boxtimes t}$ change continuously as time evloves by Theorem \ref{thm:2.1} and Proposition \ref{prop:3.5}. 
This completes the proof. 
\end{proof}

\section{Estimation of norm of free multiplicative convolution semigroups}
We give an estimation of the size of the support of $\mu^{\boxtimes t}$ for a compactly supported 
probability measure on $[0, \infty)$.

\begin{proof}[Proof of Theorem \ref{thm:0.2}]
Let $a>0$ be such that $\mathrm{supp}({\rho})\subset (a,\infty)$.
As $\lim_{r\rightarrow 0}g(r)=0$, 
the set $V_t^+$ is bounded away from zero. 
Let $\alpha_t=\min\{r:  r\in V_t^+ \}>0$.
By Theorem 1.2, we have
\[
   || \mu^{\boxtimes t}||=\frac{1}{h_t(\alpha_t)}.
\]
By the choice of $\alpha_t$, we have $A_t(\alpha_t)=0$, 
\[
 g(\alpha_t)=\int_0^\infty\frac{\alpha_t(s^2+1)}{(\alpha_t-s)^2}\;d\rho(s)=\frac{1}{t-1}
\]
and 
\[
h_t(\alpha_t)=\Phi_t(\alpha_t)=\alpha_t\exp[(t-1)u(\alpha_t) ].
\]
By (\ref{eq:rep-2.1}) and the assumption $m_1(\mu)=1$, we have
\begin{align*}
 u(\alpha_t)&=a+\int_0^\infty \frac{1+\alpha_t s}{\alpha_t-s}\,d\rho(s)\\
      &=\int_0^\infty \left( \frac{1}{s}+\frac{1+\alpha_t s}{\alpha_t-s}\right)\,d\rho(s)\\
      &=\int_0^\infty\frac{\alpha_t(1+s^2)}{(\alpha_t-s)^2}\frac{\alpha_t-s}{s}\,d\rho(s)\\
      &=-\int_0^\infty\frac{\alpha_t(1+s^2)}{(\alpha_t-s)^2}\,d\rho(s)+\alpha_t \int_0^\infty\frac{\alpha_t(1+s^2)}{(\alpha_t-s)^2}\frac{1}{s}\,d\rho(s)\\ 
      &=-\frac{1}{t-1}+\alpha_t \int_0^\infty\frac{\alpha_t(1+s^2)}{(\alpha_t-s)^2}\frac{1}{s}\,d\rho(s).
\end{align*}
It is clear that $\lim_{t\rightarrow\infty} \alpha_t=0$ and hence
\[
  1=\lim_{t\rightarrow \infty} g(\alpha_t)(t-1) =\lim_{t\rightarrow \infty} (t\alpha_t)\cdot \left( \int_0^\infty\frac{s^2+1}{s^2}\,d\rho(s)\right),
\]
which yields that $\lim_{t\rightarrow \infty} (t\alpha_t)=1/V$ thanks to Lemma \ref{lemma:3.1}. 
We then obtain
\begin{align*}
\lim_{t\rightarrow\infty}u(\alpha_t)(t-1)&=-1+\lim_{t\rightarrow\infty}[(t-1)\alpha_t]\cdot \alpha_t \int_0^\infty\frac{1+s^2}{(\alpha_t-s)^2}\frac{1}{s}\,d\rho(s)\\
 &=-1
\end{align*}
as $\mathrm{supp}(\rho)$ is bounded below from zero.

Finally, we have
\begin{align*}
\lim_{t\rightarrow \infty}\frac{||\mu^{\boxtimes t} ||}{t}&=\lim_{t\rightarrow \infty}\frac{1}{t h_t(\alpha_t)}\\
   &=\lim_{t\rightarrow \infty}\left( \frac{1}{t \cdot \alpha_t}\cdot \exp[-(t-1)u(\alpha_t)] \right)\\
   &=e\cdot \int_0^\infty\frac{s^2+1}{s^2}\,d\rho(s)\\
   &=eV. 
   \end{align*}
This proves the desired result. 
\end{proof}

\begin{proposition}
Let $\mu$ be a probability measure supported on $[c, d]$ not being a Dirac measure with $c,d>0$. 
There exists $T$ such that the sets $V_t^+$ and $\mathrm{supp}(\mu^{\boxtimes t})$
have only one connected component for all $t>T$. 
\end{proposition}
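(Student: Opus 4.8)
The plan is to control the measure $\rho$ appearing in the representation \refto{rep-2.1} and to show that for large $t$ the set $V_t^+$ degenerates to a single interval; part~(3) of Theorem~\ref{density1} will then force $\mathrm{supp}(\mu^{\boxtimes t})$ to be a single interval as well (for $t$ large enough). The starting observation is that since $\mu$ is supported on $[c,d]$ with $c>0$, the function $\eta_\mu$ is analytic and real on $(-\infty,0]$ and also on an interval $(1/d',\infty)$ of the positive axis to the right of the support singularities, so $\kappa_\mu=z/\eta_\mu$, and hence $u=\log\kappa_\mu$, extends analytically and real-valued across $(-\infty,0]\cup(1/c,\infty)$ (after a suitable branch choice). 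By the Stieltjes inversion built into \refto{rep-2.1}, this means $\mathrm{supp}(\rho)\subset[\,1/d,\,1/c\,]$ up to relabelling; in particular $\rho$ is \emph{compactly supported and bounded away from both $0$ and $\infty$}. I would pin down the two endpoints of $\mathrm{supp}(\rho)$ precisely from the support of $\mu$ using \refto{identity}.

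Next I would analyze $g(r)=\int_0^\infty \frac{r(s^2+1)}{(r-s)^2}\,d\rho(s)$ on the complement of $\mathrm{supp}(\rho)$. Writing $\mathrm{supp}(\rho)\subset[p,q]$ with $0<p\le q<\infty$, the function $g$ is finite and real-analytic on $(0,p)\cup(q,\infty)$, tends to $0$ as $r\to 0^+$ and as $r\to\infty$, and by Lemma~\ref{lemma:3.2} it is strictly convex on each of these two intervals (each being an open interval inside $(V_t^+)^c$ for every $t$, since $V_t^+\supset\mathrm{supp}(\rho)$-neighbourhoods shrink with $t$—more precisely $V_t^+$ only shrinks as $t$ decreases and always $g>1/(t-1)$ fails near $0$ and near $\infty$). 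A strictly convex function on an interval that vanishes at one end and is positive in the interior is strictly monotone; hence $g$ is strictly increasing on $(0,p)$ from $0$ to $g(p^-)\in(0,\infty]$ and strictly decreasing on $(q,\infty)$ from $g(q^+)$ down to $0$. Consequently, for the threshold level $1/(t-1)$, the superlevel set $V_t^+=\{g>1/(t-1)\}$ meets $(0,p)$ in at most one interval and meets $(q,\infty)$ in at most one interval; and as $t\to\infty$ the level $1/(t-1)\to 0$, so eventually $V_t^+$ contains all of $(0,p)$ near $p$ and all of $(q,\infty)$ near $q$. Since $\rho((p,q)^{\mathrm{int}})$ need not be all of $[p,q]$, I must also check the behaviour of $g$ on any gaps of $\mathrm{supp}(\rho)$ inside $[p,q]$: on each such gap $g$ is again strictly convex (Lemma~\ref{lemma:3.2}), but now it can dip below its endpoint values, so it has a unique interior minimum $m^*$. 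The key point: once $1/(t-1)<m^*$ for every such gap-minimum, $V_t^+$ contains the whole of $[p,q]$, hence together with the pieces on $(0,p)$ and $(q,\infty)$ it is a single interval. Taking $T$ larger than $1+1/\min m^*$ (the minimum over the finitely many—or, a priori, possibly infinitely many—gaps) handles it; if there are infinitely many gaps one instead uses that $\inf_{r\in[p,q]} g(r)>0$ because $g$ is continuous and strictly positive on the compact set $[p,q]$ (it cannot vanish there since $\rho$ charges every neighbourhood intersecting $[p,q]$—or more simply $g(r)\ge \rho([p,q])\cdot\frac{r}{(\mathrm{diam})^2}\cdot\min(s^2+1)>0$), so choose $T$ with $1/(T-1)<\min_{[p,q]}g$.

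Finally, with $V_t^+$ a single interval for $t>T$, I invoke Theorem~\ref{density1}(3): the number of components of $\mathrm{supp}(\mu^{\boxtimes t})^{\mathrm{ac}}$ is non-increasing in $t$, and since $S_t=\{1/h_t(r):r\in V_t^+\}$ is the continuous image of a single interval under the homeomorphism $h_t$, it is itself a single interval, so $(\mu^{\boxtimes t})^{\mathrm{ac}}$ is supported on one interval. There are no atoms: Theorem~\ref{thm:2.1}(1) says atoms of $\mu^{\boxtimes t}$ require an atom of $\mu$ of mass $>(t-1)/t$, which is impossible once $t>1/(1-\max_x\mu(\{x\}))$ (and $\mu$ is not a Dirac mass). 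Enlarging $T$ to also exceed this bound, we conclude $\mathrm{supp}(\mu^{\boxtimes t})=\overline{S_t}$ is a single interval for all $t>T$, completing the proof.

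The main obstacle I anticipate is the bookkeeping around the gaps of $\mathrm{supp}(\rho)$ inside $[p,q]$: ruling out that $g$ dips too low on a gap requires either a lower bound on $g$ over the compact set $[p,q]$ uniform in the (possibly many) gaps, or an argument that there are only finitely many relevant gaps. I expect the clean route is the uniform positive lower bound $\min_{r\in[p,q]}g(r)>0$ coming from compactness plus $g>0$ on $[p,q]$, which sidesteps any case analysis on the gap structure; establishing that $g>0$ on all of $[p,q]$ (not just on $\mathrm{supp}(\rho)$) is the one point that needs a short explicit estimate rather than a soft argument.
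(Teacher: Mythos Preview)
Your approach is essentially the same as the paper's: show $\rho$ is compactly supported in some $[p,q]\subset(0,\infty)$, obtain a positive lower bound for $g$ on $[p,q]$ so that $[p,q]\subset V_T^+$ for large $T$, and then argue that $V_T^+$ meets each of $(0,p)$ and $(q,\infty)$ in a single interval, whence $V_T^+$ is connected. The paper is terser (it fixes some $t_0$, uses Lemma~\ref{lemma:3.2} to see $\rho$ vanishes on $[a,b]\setminus V_{t_0}^+$, and then asserts $g$ has a positive minimum on $\overline{[a,b]\setminus V_{t_0}^+}$), but the skeleton is identical, and your explicit lower bound $g(r)\ge \rho([p,q])\cdot p(p^2+1)/(q-p)^2$ on $[p,q]$ is a clean way to avoid any case analysis on gaps. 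Your handling of atoms via Theorem~\ref{thm:2.1}(1) is a point the paper leaves implicit.

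One small correction: your justification for invoking Lemma~\ref{lemma:3.2} on $(0,p)$ and $(q,\infty)$ is wrong. These intervals are \emph{not} contained in $(V_t^+)^c$ for all $t$; indeed the whole point is that $V_t^+$ eventually spills into them. What you actually need is that $\rho$ vanishes on $(0,p)\cup(q,\infty)$ (which you already know from $\mathrm{supp}(\rho)\subset[p,q]$), and then the strict convexity of $g$ there is an elementary computation: for $r\notin\mathrm{supp}(\rho)$ one differentiates under the integral to get $g''(r)=\int\frac{(2r+4s)(s^2+1)}{(r-s)^4}\,d\rho(s)>0$. With that fix your monotonicity argument on the outer intervals goes through exactly as written.
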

\begin{proof}
The measure $\rho$ determined by  (\ref{eq:rep-2.1}) is compactly supported since $\mathrm{supp}(\mu)\subset [c,d]$. 
Hence $\lim_{r\rightarrow 0}g(r)=\lim_{r\rightarrow \infty}g(r)=0$ and 
$V_t^+$ is bounded away from zero and $\infty$. 

Let $[a,b]\subset (0,\infty)$ be a finite interval such that $\mathrm{supp}(\rho)\subset [a,b]$. Fix $t_0>1$. For any open interval $I\subset [a,b]\backslash V_{t_0}^+$,
$\rho(I)=0$ by Lemma \ref{lemma:3.2}.  Hence the function $g(r)=\int_0^\infty\frac{r(s^2+1)}{(r-s)^2}\;d\rho(s)$ has a positive local minimum at $\overline{[a,b]\backslash V_{t_0}^+}$. 
Therefore, we can choose $T$
large enough, so that $[a,b]\subset V_T^+$. The set $V_T^+$ has only one connected component as $\mathrm{supp}(\rho)\subset [a,b]$.
It then follows that $V_t^+$ and hence $\mathrm{supp}(\mu^{\boxtimes t})$
have only one connected component for all $t>T$ by Theorem \ref{density1}.
\end{proof}

\begin{proposition}\label{prop:4.2}
Let $\mu$ be a probability measure on $[c, d]$ not being a Dirac measure
for some $c,d>0$. Define
\[
a_t=\sup\{ a: a<x\quad \mathrm{for\quad  all}\quad x\in \mathrm{supp}(\mu^{\boxtimes t}) \},
\]
and
\[
b_t=\inf\{ b: b>x\quad \mathrm{for\quad all}\quad x\in \mathrm{supp}(\mu^{\boxtimes t}) \}.
\]
Then 
\[
\lim_{t\rightarrow \infty} (a_t)^{1/t}=\left(\int_0^\infty x^{-1}d\mu(x)\right)^{-1},
\quad \text{and}\quad 
\lim_{t\rightarrow \infty}(b_t)^{1/t}=\int_0^\infty x\,d\mu(x).
\]
\end{proposition}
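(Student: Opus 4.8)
The plan is to mimic the proof of Theorem~\ref{thm:0.2}: express the two edges of $\mathrm{supp}(\mu^{\boxtimes t})$ through the homeomorphism $h_t$ evaluated at the two endpoints of $V_t^+$, and then pass to $t$-th roots, in which only the factor $\kappa_\mu(\,\cdot\,)^{t-1}$ survives. Since $\mathrm{supp}(\mu)\subseteq[c,d]$ with $c>0$, the measure $\rho$ of \eqref{eq:rep-2.1} is compactly supported in $(0,\infty)$; fix $0<p<q$ with $\mathrm{supp}(\rho)\subseteq[p,q]$. Then $g(r)\to 0$ as $r\to 0^+$ and as $r\to\infty$, while $g(r)>0$ for every $r>0$; hence each $V_t^+$ is a bounded open subset of $(0,\infty)$ bounded away from $0$, the $V_t^+$ increase to $(0,\infty)$, and with $\alpha_t:=\inf V_t^+$, $\beta_t:=\sup V_t^+$ one has $\alpha_t\to 0$, $\beta_t\to\infty$ as $t\to\infty$. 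Both $\alpha_t,\beta_t$ lie in $\overline{V_t^+}$ and satisfy $g(\alpha_t)=g(\beta_t)=1/(t-1)$ and $A_t(\alpha_t)=A_t(\beta_t)=0$ (as $g(r,\cdot)$ is strictly decreasing). Since $\mu$ is not a Dirac measure, $\max_x\mu(\{x\})<1$, so by Theorem~\ref{thm:2.1}(1) the measure $\mu^{\boxtimes t}$ is non-atomic once $(t-1)/t>\max_x\mu(\{x\})$; for such $t$, Theorem~\ref{density1} yields $\mathrm{supp}(\mu^{\boxtimes t})=\overline{S_t}=\{1/h_t(r):r\in\overline{V_t^+}\}$, using that the density there is strictly positive on $S_t$ (because $A_t>0$ on $V_t^+$). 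Since $h_t$ is an increasing homeomorphism of $[0,\infty)$, $r\mapsto 1/h_t(r)$ is decreasing, whence $b_t=1/h_t(\alpha_t)$ and $a_t=1/h_t(\beta_t)$. For $t$ large, $\alpha_t<p$ and $\beta_t>q$, so $u$ is real-analytic at both points and, with $\kappa_\mu=e^{u}$,
\[
h_t(\alpha_t)=\Phi_t(\alpha_t)=\alpha_t\,\kappa_\mu(\alpha_t)^{t-1},\qquad h_t(\beta_t)=\Phi_t(\beta_t)=\beta_t\,\kappa_\mu(\beta_t)^{t-1}.
\]

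Next I would collect the asymptotics. From $g(\alpha_t)=1/(t-1)$ one gets $(t-1)\alpha_t=\bigl(\int_0^\infty(1+s^2)(\alpha_t-s)^{-2}\,d\rho(s)\bigr)^{-1}$, which, since $\alpha_t\to 0$, converges to $\bigl(\int_0^\infty(1+s^2)s^{-2}\,d\rho(s)\bigr)^{-1}\in(0,\infty)$; and $g(\beta_t)=1/(t-1)$ gives $\beta_t/(t-1)=\int_0^\infty(1+s^2)(1-s/\beta_t)^{-2}\,d\rho(s)\to\int_0^\infty(1+s^2)\,d\rho(s)\in(0,\infty)$. Hence $\alpha_t\asymp t^{-1}$ and $\beta_t\asymp t$, so $\alpha_t^{1/t}\to 1$ and $\beta_t^{1/t}\to 1$. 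For the edge values of $\kappa_\mu$: as $0\notin\mathrm{supp}(\rho)$, the function $\kappa_\mu$ extends analytically across $0$, and Lemma~\ref{lemma:3.1} gives $\kappa_\mu(\alpha_t)\to\kappa_\mu(0)=1/m_1(\mu)$; and the exact analogue of Lemma~\ref{lemma:3.1} carried out at infinity instead of at $0$ (examine $\psi_\mu(r)=\int_0^\infty\frac{rs}{1-rs}\,d\mu(s)$ as $r\to\infty$: there $\psi_\mu(r)\to -1$ and $r\bigl(1+\psi_\mu(r)\bigr)\to-\int_0^\infty x^{-1}\,d\mu(x)$, so $\kappa_\mu(r)=r\bigl(1+\psi_\mu(r)\bigr)/\psi_\mu(r)\to\int_0^\infty x^{-1}\,d\mu(x)$) gives $\kappa_\mu(\beta_t)\to\int_0^\infty x^{-1}\,d\mu(x)=:m_{-1}(\mu)$. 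Taking $t$-th roots in the displayed formulas and using that $\kappa_\mu(\alpha_t),\kappa_\mu(\beta_t)$ tend to positive limits while the exponent $-(t-1)/t\to-1$,
\[
(b_t)^{1/t}=\alpha_t^{-1/t}\,\kappa_\mu(\alpha_t)^{-(t-1)/t}\longrightarrow m_1(\mu)=\int_0^\infty x\,d\mu(x),
\]
\[
(a_t)^{1/t}=\beta_t^{-1/t}\,\kappa_\mu(\beta_t)^{-(t-1)/t}\longrightarrow m_{-1}(\mu)^{-1}=\Bigl(\int_0^\infty x^{-1}\,d\mu(x)\Bigr)^{-1}.
\]

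The main obstacle is the bookkeeping in the reduction step — verifying that for large $t$ the atoms disappear and $\mathrm{supp}(\mu^{\boxtimes t})$ is exactly $\overline{S_t}$, so that the two endpoints of $V_t^+$ transport, under the monotone homeomorphism $h_t$, precisely to $b_t$ and $a_t$ — together with the one genuinely new computation, namely the ``Lemma~\ref{lemma:3.1} at infinity'' identity $\lim_{r\to\infty}\kappa_\mu(r)=\int_0^\infty x^{-1}\,d\mu(x)$ (equivalently $a+\int_0^\infty s\,d\rho(s)=\log\int_0^\infty x^{-1}\,d\mu(x)$ in the notation of \eqref{eq:rep-2.1}). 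Everything else is routine limit-passage justified by compactness of $\mathrm{supp}(\rho)$ in $(0,\infty)$.
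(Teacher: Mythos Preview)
Your proof is correct and follows essentially the same route as the paper: identify $a_t=1/h_t(\beta_t)$ and $b_t=1/h_t(\alpha_t)$ via Theorem~\ref{density1}, take $t$-th roots so that only the factors $\kappa_\mu(\alpha_t)$ and $\kappa_\mu(\beta_t)$ survive, and evaluate their limits as $1/m_1(\mu)$ and $\int x^{-1}\,d\mu(x)$. You are in fact slightly more careful than the paper, explicitly disposing of atoms for large $t$ and spelling out the computation $\lim_{r\to\infty}\kappa_\mu(r)=\int x^{-1}\,d\mu(x)$, both of which the paper leaves implicit.
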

\begin{proof}
The set $V_t^+$ is bounded away from zero and $\infty$. 
Let $\alpha_t=\min\{r:r\in V_t^+ \}>0$ and $\beta_t=\max\{r:r\in V_t^+ \}<\infty$,
we have $\lim_{t\rightarrow \infty} \alpha_t=0$ and $\lim_{t\rightarrow \infty} \beta_t=0$
by Theorem \ref{density1}. 
By adapting the argument in the proof of Theorem \ref{thm:0.2}, we have
\[
\lim_{t\rightarrow\infty}t\alpha_t={m_1(\mu)}/{V}\quad \text{and} \quad 
 \lim_{t\rightarrow\infty}(t/\beta_t)\int_0^\infty(s^2+1)\,d\rho(s)=1.
\]
On the other hand, we have
\[
\lim_{t\rightarrow\infty} \exp[u(\alpha_t)]=\lim_{z\rightarrow 0}\exp[u(z)]=\lim_{z\rightarrow 0}z/\eta_\mu(z)=1/m_1(\mu)
\]
and 
\[
\lim_{t\rightarrow\infty} \exp[u(\beta_t)]=\lim_{z\rightarrow\infty}\exp[u(z)]=\lim_{z\rightarrow \infty}z/\eta_\mu(z)
=\int_0^\infty x^{-1}d\mu(x).
\]
Therefore, we have
\begin{align*}
\lim_{t\rightarrow \infty}(a_t)^{1/t} &=\lim_{t\rightarrow\infty}(h_t(\beta_t))^{-1/t}\\
    &=\lim_{t\rightarrow\infty}\{\beta_t\exp[(t-1)u(\beta_t)]\}^{-1/t}\\
    &=\lim_{t\rightarrow\infty} \exp[-u(\beta_t)]\\
    &=\left(\int_0^\infty x^{-1}d\mu(x)\right)^{-1};
\end{align*}
and 
\begin{align*}
\lim_{t\rightarrow \infty}(b_t)^{1/t} &=\lim_{t\rightarrow\infty}(h_t(\alpha_t))^{-1/t}\\
    &=\lim_{t\rightarrow\infty}\{\alpha_t\exp[(t-1)u(\alpha_t)]\}^{-1/t}\\
    &=\lim_{t\rightarrow\infty}\exp[-u(\alpha_t)]=m_1(\mu).
\end{align*}
This finishes the proof. 
\end{proof}
\begin{remark}
Proposition \ref{prop:4.2} should be compared with the main result in \cite{HM2013}, where weak limit of
rescaled discrete semigroups 
were studied. Our method is not suitable to study weak limit, but works 
to estimate the asymptotic bound of free multiplicative convolution semigroups. 
The interested reader can easily generalize results in \cite{HM2013} to continuous free multiplicative convolution semigroups using their method.   
\end{remark}
\begin{remark}
Free multiplicative convolution semigroup can also be defined for measures on the unit circle \cite{BB2005}. 
Let $\mu$ be a probability measure on the unit circle not being a Dirac measure, it is known in \cite[Proposition 3.26]{zhong2} that 
$\mu^{\boxtimes t}$ converges to the uniform measure on the unit circle as $t\rightarrow\infty$. 
\end{remark}
\section{Acknowledgements}
The second author wants to thank Hari Bercovici, Alexandru Nica, Jiun-Chau Wang and John Williams for useful discussions. 
This work was partially supported by NSFC no. 11501423, 11431011. 
\bibliography{clipboard_sup}
\bibliographystyle{amsplain}

\end{document}